\newtheorem{thm}{Theorem}[section]
\newtheorem{prop}[thm]{Proposition}
\newtheorem{lem}[thm]{Lemma}
\newtheorem{rem}[thm]{Remark}
\newtheorem{defn}[thm]{Definition}
\DeclareMathOperator{\diam}{diam}
\newcommand{\Z}{{\mathbb Z}}      \newcommand{\R}{{\mathbb R}}
\def\dist{\qopname\relax o{dist}}
\title[On improved fractional  Sobolev--Poincar\'e inequalities]{On improved fractional  Sobolev--Poincar\'e inequalities}
\author[B{.} Dyda]{Bart{\l}omiej Dyda}
\address[B.D.]{ Institute of Mathematics and Computer Science\\ Wroc{\l}aw University of Technology\\
Wybrze\.ze Wyspia\'nskiego 27,
50-370 Wroc{\l}aw, Poland
}
\email{bdyda@pwr.wroc.pl}
\author[L. Ihnatsyeva]{Lizaveta Ihnatsyeva}
\address[L.I.]{Department of Mathematics and Statistics,
P.O. Box 68, FI-00014 University of Helsinki, Finland} \email{lizaveta.ihnatsyeva@aalto.fi}
\author[A.V. V\"ah\"akangas]{Antti V. V\"ah\"akangas}
\address[A.V.V.]{Department of Mathematics and Statistics,
P.O. Box 68, FI-00014 University of Helsinki, Finland} \email{antti.vahakangas@helsinki.fi}
\date{}
\begin{document}

\keywords{Fractional Sobolev--Poincar\'e inequality, John domain, separation property}
\subjclass[2010]{46E35 (26D10)}

\begin{abstract}
We prove a certain improved fractional Sobolev--Poincar\'e inequality  on John domains; the proof is based on
the equivalence  of the corresponding weak and strong type inequalities.
We also give necessary conditions for the validity of an improved fractional Sobolev--Poincar\'e inequality,
in particular, we show that a domain having a finite measure and satisfying  this
 inequality, and  a   `separation property',
is a John domain.
\end{abstract}

\maketitle

\section{Introduction}

It is known that the classical Sobolev--Poincar\'e inequality holds on a $c$-John domain $G$ (for the John condition, see Definition \ref{sjohn}).
Namely, if $1\le p<n$, then there exists a constant $C=C(n,p,c)>0$ such that inequality
\begin{equation}\label{sobolev_poincare}
\int_G |u(x)-u_G|^{np/(n-p)}\,dx\le C\bigg(\int_G |\nabla u(x)|^p\,dx\bigg)^{n/(n-p)}
\end{equation}
holds for every $u\in W^{1,p}(G)$.
When $1<p<n$ this result was proved independently by Martio \cite{MR948443} and Reshetnyak \cite{MR851612}.
The method of Reshetnyak is based on the following integral representation in a $c$-John domain:  inequality
\begin{equation}\label{e.repr}
\lvert u(x)-u_G\rvert \le C(n,c)\int_G \frac{\lvert \nabla u(y)\rvert}{\lvert x-y\rvert^{n-1}}\,dy\,,\qquad x\in G\,,
\end{equation}
holds whenever  $u$ is a Lipschitz function on $G$. Bojarski extended inequality \eqref{sobolev_poincare}
to the case $p=1$ by using a certain chaining technique \cite{B}. Later  Haj{\l}asz  \cite{H} showed that inequality \eqref{sobolev_poincare} on John domains for $p=1$
follows from the integral representation \eqref{e.repr}  together with the Maz'ya's truncation argument \cite{M}.
It is also known, that the John condition is  necessary for
the classical Sobolev--Poincar\'e inequality \eqref{sobolev_poincare} to hold, if $G$ is of finite measure
and  satisfies the  separation property; 
this result is due to Buckley and Koskela \cite{MR1359964}.
For instance, every simply connected planar domain satisfies 
 the separation property.

In this paper, we consider certain fractional counterparts of inequality \eqref{sobolev_poincare}.
Let $0<\delta<1$, $1\le p<n/\delta$ and let $G$ be a bounded domain in $\R^n$, $n\ge 2$. It follows from the results obtained by Zhou in \cite[Theorem 1.2]{Z} that
the fractional Sobolev--Poincar\'e inequality
\begin{equation}\label{fractionalqp_nonimproved}
\int_G\vert u(x)-u_G\vert ^{np/(n-\delta p)}\,dx
\le
C
\biggl(\int_G\int_{G}\frac{\vert u(x)-u(y)\vert^p}{\vert x-y\vert ^{n+\delta p}}\,dy\,dx
\biggr)^{n/(n-\delta p)}
\end{equation}
holds for  some $C>0$ and every
$u\in L^p(G)$ if and only if $G$ is Ahlfors $n$-regular.
For example, John domains are $n$-Ahlfors regular, but the converse
fails in general. On the other hand, if we assume that $G$ is a $c$-John domain and  $0<\tau<1$ is given,
then there  exists a constant $C=C(n,\delta,c,\tau,p)>0$ such that a stronger inequality \begin{equation}\label{fractionalqp_I}
\int_G\vert u(x)-u_G\vert ^{np/(n-\delta p)}\,dx
\le
C
\biggl(\int_G\int_{B(x,\tau \dist(x,\partial G))}\frac{\vert u(x)-u(y)\vert ^p}{\vert x-y\vert ^{n+\delta p}}\,dy\,dx
\biggr)^{n/(n-\delta p)}
\end{equation}
holds for every $u\in L^1(G)$.
We  call inequality \eqref{fractionalqp_I} an  improved fractional Sobolev--Poincar\'e inequality, and
it is the main object in this paper.
These inequalities have applications, e.g., in peridynamics, we refer  to \cite{bellido}.
A proof of inequality \eqref{fractionalqp_I} for  $1<p<n/\delta$ is obtained in
 \cite{H-SV} by  establishing a fractional analogue
of the representation formula \eqref{e.repr}  in John domains.

 In  \S\ref{s.fractional},  we extend
the improved inequality \eqref{fractionalqp_I} on John domains to the case $p=1$
by using the mentioned representation formula and the
fractional Maz'ya truncation method from \cite{Dyda3}.
The truncation method is used to show that
inequality \eqref{fractionalqp_I}  is equivalent
to a corresponding weak type inequality, see Theorem \ref{t.truncation}.

%we refer to \S\ref{s.weak_to_strong} for our adaptation of
%the aforementioned Haj{\l}asz's result on inequality \eqref{sobolev_poincare}.

We also address
the necessity of John condition for improved fractional Sobolev--Poincar\'e inequalities;
a simple counterexample shows
that the improved inequality
\eqref{fractionalqp_I}
does not hold on  all bounded Ahlfors $n$-regular domains, we refer to \S\ref{s.counter}.
Furthermore,  by adapting the method of Buckley and Koskela  in \S\ref{s.necessary}, we show that the
John condition is necessary for the improved fractional Sobolev--Poincar\'e inequality
\eqref{fractionalqp_I} to hold, if
the domain $G$ has a finite measure and  satisfies the  separation property;
we refer to Theorem \ref{t.necessary}.

When $G$ is a~bounded Lipschitz domain and $\tau\in (0,1]$,  there exists a constant $C>0$
such that, for every $u\in L^1(G)$, the following inequality holds:
\begin{equation}\label{e.lip}
\int_G\int_{G}\frac{\vert u(x)-u(y)\vert^p}{\vert x-y\vert ^{n+\delta p}}\,dy\,dx
\le C
\int_G\int_{B(x,\tau \dist(x,\partial G))}\frac{\vert u(x)-u(y)\vert ^p}{\vert x-y\vert ^{n+\delta p}}\,dy\,dx\,,
\end{equation}
see \cite[formula (13)]{MR2215170}. 
In particular, the fractional Sobolev--Poincar\'e inequalities
\eqref{fractionalqp_nonimproved} and \eqref{fractionalqp_I}
are  equivalent in this case. However, inequality \eqref{e.lip} does not hold for John domains in general; 
we give a counterexample in Proposition~\ref{p.no-comp}.

% The mentioned results are summarised in the following
% characterisation.
%
% \begin{thm}\label{t.main}
% Suppose that $G$ is a domain in $\R^n$, $n\ge 2$, which satisfies the separation property and is of finite measure, i.e.,  $\lvert G\rvert<\infty$. Let $0<\delta,\tau<1$ and $1\le p<n/\delta$ be given.
% Then the following conditions are equivalent.
% \begin{itemize}
% \item[(A)] $G$ is a John domain.
% \item[(B)] There exists a constant $C>0$ such that the improved fractional Sobolev--Poincar\'e inequality \eqref{fractionalqp_I} holds for Êevery $u\in L^1(G)$.
% \item[(C)] There exists a constant $C'>0$ such that the weak type inequality
% \begin{align*}
% &\quad\inf_{a\in\R}\sup_{t>0} \lvert \{x\in G:\,|u(x)-a|>t\} \rvert t^{np/(n-\delta p)}
%  \\&\qquad\qquad\qquad  \le C'
% \biggl(\int_{G}\int_{B(x,\tau \dist(x,\partial G))}\frac{\vert u(x)-u(y)\vert^p}{\vert x-y\vert ^{n+\delta p}}
% \,dy\,dx\biggr)^{n/(n-\delta p)}
% \end{align*}
% holds for  every $u\in L^\infty(G)$.
% \end{itemize}
% \end{thm}
%
% The implication from (A) to (B) in Theorem \ref{t.main} follows from
%  \cite[Theorem 4.10]{H-SV} and Theorem \ref{t.application}. Theorem \ref{t.necessary} yields
% the implication from (B) to (A). The equivalence between (B) and (C) is given in Theorem \ref{t.truncation_II}.

\subsection*{Acknowledgements}

L.I. and A.V.V. were supported by the Finnish Academy of Science and
Letters, Vilho, Yrj\"o and Kalle V\"ais\"al\"a Foundation.
B.D. was supported in part by NCN grant 2012/07/B/ST1/03356.

\section{Notation and preliminaries}

Throughout the paper we assume that $G$ is a domain in
$\R^n$, $n\geq 2$.
The distance from $x\in G$ to the boundary of $G$ is  $\dist(x,\partial G)$.
The diameter of a set $A\subset \R^n$ is $\mathrm{diam}(A)$.
The Lebesgue $n$-measure of a  measurable set $A\subset \R^n$ is denoted by $\vert A\vert.$
For a measurable set $A$ with a finite and  nonzero measure we write
$u_A=\lvert A\rvert^{-1}\int_{A}u(x)\,dx$
whenever the integral is defined.
The characteristic function of a set $A$ is written as $\chi_A$.
If  a function $u$ is defined on $G\subset \R^n$ and occurs in a~place where
a~function defined on $\R^n$ is needed, we understand that $u$ is extended by zero to the whole $\R^n$.
We let $C(\ast,\dotsb,\ast)$  denote a constant which depends on the quantities appearing
in the parentheses only.

We use the following definition for John domains;
alternative equivalent definitions may be found in \cite{MR1246886}.

\begin{defn}\label{sjohn}
  A bounded domain $G$ in $\R^n$, $n\ge 2$, is a $c$-John domain  (John domain) with a constant $c\ge 1$, if
  there exist  $x_0\in G$  such that every
  point $x$ in $G$ can be joined to $x_0$ by a rectifiable curve
  $\gamma:[0,\ell]\to G$, parametrized by its arc length, for which
  $\gamma(0)=x$, $\gamma(\ell)=x_0$, and
\[
\dist(\gamma(t),\partial G)\ge t/c\,,
\]
for every $t\in [0,\ell]$.
The point $x_0$ is called a John center of $G$.
\end{defn}

John domains are Ahlfors $n$-regular.

\begin{defn}\label{n-set}
A domain $G$ in $\R^n$ is called Ahlfors $n$-regular,
if there exists a constant $C>0$ such that
 inequality
 $\lvert G \cap B(x,r)\rvert  \geq C r^n$ holds for every $x\in G$ and every $r\in (0,1]$.
 \end{defn}

Let us also recall  the definition of the separation property  from
\cite[Definition 3.2]{MR1359964}.

\begin{defn} A proper domain $G$ in $\R^n$ with a fixed point $x_0\in G$
satisfies a separation property if there exists a constant $C_0>0$ such
that the following holds: for every $x\in G$, there exists a curve $\gamma$ joining $x$ to $x_0$
in $G$ so that for each $t$ either
\[\gamma([0,t])\subset
B:=B(\gamma(t),C_0\dist(\gamma(t),\partial G))\] or each
$y\in\gamma([0,t])\setminus B$ belongs to a different component of
$G\setminus\partial B$ than $x_0$.
\end{defn}

Simply connected
proper planar domains satisfy the separation property. More generally, if $G$ is quasiconformally
equivalent to a uniform domain, then $G$ satisfies
the separation property. For the proofs of these
statements we refer to \cite{MR1359964}.

The Riesz $\delta$-potential $\mathcal{I}_\delta$ with $0<\delta<n$
is defined for an appropriate measurable function $f$ on $\R^n$ and $x\in \mathbb{R}^n$ by
\[
\mathcal{I}_\delta(f)(x) = \int_{\R^n} \frac{f(y)}{|x-y|^{n-\delta}} \,dy\,.
\]
The Riesz $\delta$-potential satisfies the following weak type estimate, see \cite[p. 56]{AH} for the proof.
\begin{thm}\label{AdamsHedberg}
Let $0<\delta<n$. Then there
exists
a constant $C=C(n,\delta)>0$ such that inequality
\[
\sup_{t>0} \big|\{x\in \R^n:\,|\mathcal{I}_\delta(f)(x)|>t\}\big|t^{n/(n-\delta)}\le C\|f\|_1^{n/(n-\delta)}
\]
holds for every $f\in L^1(\R^n)$.
\end{thm}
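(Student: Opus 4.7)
The plan is to prove the weak type estimate by the classical splitting trick, dividing the Riesz potential into a local piece and a tail piece, with the splitting radius chosen depending on the level $t$.

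First I would fix $t > 0$ and a parameter $r > 0$ to be chosen later. For each $x \in \R^n$, I split
\[
|\mathcal{I}_\delta(f)(x)| \le \int_{|x-y|<r} \frac{|f(y)|}{|x-y|^{n-\delta}}\,dy + \int_{|x-y|\ge r} \frac{|f(y)|}{|x-y|^{n-\delta}}\,dy =: A_r(x) + B_r(x).
\]
The tail $B_r(x)$ is handled pointwise by a crude bound: $|x-y|^{\delta-n} \le r^{\delta-n}$ on the region of integration, so $B_r(x) \le r^{\delta-n}\|f\|_1$ for every $x\in \R^n$. I then choose $r = r(t)$ so that $r^{\delta-n}\|f\|_1 = t/2$, i.e.\ $r = (2\|f\|_1/t)^{1/(n-\delta)}$. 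With this choice, $\{x : |\mathcal{I}_\delta f(x)| > t\}$ is contained in $\{x : A_r(x) > t/2\}$.

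Next I would control the $L^1$ norm of the local part $A_r$ by Fubini:
\[
\int_{\R^n} A_r(x)\,dx = \int_{\R^n} |f(y)| \int_{|x-y|<r} |x-y|^{\delta - n}\,dx\,dy = c(n)\frac{r^\delta}{\delta}\|f\|_1,
\]
since the inner integral, computed in polar coordinates, equals a dimensional constant times $r^\delta/\delta$. By Chebyshev's inequality,
\[
|\{x : A_r(x) > t/2\}| \le \frac{2}{t}\int_{\R^n} A_r(x)\,dx \le \frac{C(n,\delta)\,r^\delta\|f\|_1}{t}.
\]

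Finally, I would substitute the chosen $r$ and collect exponents. Since $r^\delta = (2\|f\|_1/t)^{\delta/(n-\delta)}$, one gets
\[
|\{x : |\mathcal{I}_\delta f(x)| > t\}| \le C(n,\delta)\left(\frac{\|f\|_1}{t}\right)^{\delta/(n-\delta)+1} = C(n,\delta)\left(\frac{\|f\|_1}{t}\right)^{n/(n-\delta)},
\]
which is exactly the claimed weak type bound after multiplying by $t^{n/(n-\delta)}$ and taking the supremum over $t>0$. There is no real obstacle here; the only point that requires a little care is the bookkeeping of exponents to verify that the identity $\delta/(n-\delta) + 1 = n/(n-\delta)$ produces precisely the scaling needed for the weak $L^{n/(n-\delta)}$ quasi-norm.
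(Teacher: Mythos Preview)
Your argument is correct: the splitting of $\mathcal{I}_\delta f$ into a local part and a tail, together with Chebyshev's inequality applied to the local part, is exactly the standard proof of the weak $(1,n/(n-\delta))$ bound for the Riesz potential. The only trivial case to mention is $\|f\|_1=0$, where the inequality is vacuous; otherwise your choice of $r$ is well-defined.

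Note, however, that the paper does not actually supply a proof of this theorem at all. It is stated as a known result with a reference to \cite[p.~56]{AH} (Adams--Hedberg). The proof you wrote is essentially the one found there and in most potential-theory textbooks, so there is no discrepancy of method to discuss---you have simply filled in what the paper chose to quote.
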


The following theorem gives a fractional representation formula in a John domain.
This result is essentially contained in the proof of  \cite[Theorem 4.10]{H-SV}. Therein the constants
need to be tracked more carefully, but this can be done in a straightforward way.
%and we omit the details here.

\begin{thm}\label{t.representation}
Let  $0<\tau,\delta<1$ and $M>8/\tau$. Suppose that $G\subset\R^n$ is a $c$-John domain and $u\in L^1_{\textup{loc}}(G)$.
Let $x_0\in G$ be the John center of $G$ and write $B_0=B(x_0,\mathrm{dist}(x_0,\partial G)/(Mc))$. Then there exists a constant $C=C(M,n,c,\delta)>0$ such that inequality
\[
\vert u(x)-u_{B_0}\vert\le C
\int_{G}\frac{g(y)}{\vert x-y\vert ^{n-\delta}}\,dy=C\,\mathcal{I}_{\delta}(\chi_G g)(x)
\]
holds if  $x\in G$ is a Lebesgue point of $u$ and the function $g$ is defined by
\begin{equation*}
g(y)=\int_{B(y,\tau\dist(y,\partial G))}\frac{\vert u(y)-u(z)\vert}{\vert y-z\vert ^{n+\delta}}\,dz\,,\quad y\in G.
\end{equation*}
\end{thm}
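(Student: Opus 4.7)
The plan is to reduce the theorem to a telescoping sum of differences of ball averages along a Whitney-type chain associated to the John curve, and to estimate each term by a fractional Poincar\'e inequality on a single ball. Fix a Lebesgue point $x\in G$ and let $\gamma:[0,\ell]\to G$ be the arc-length parametrized John curve joining $x$ to the John center $x_0$. I would first build a chain of balls $B_k=B(y_k,r_k)$, $k=0,1,2,\ldots$, where $y_k=\gamma(t_k)$ lie on the curve, with $t_0=\ell$ and $t_k\to 0$, and where $r_k$ is chosen as a fixed small multiple of $\mathrm{dist}(y_k,\partial G)$ (the multiple depending on $M$, $c$, $\tau$). With the terminal ball taken equal to $B_0=B(x_0,\mathrm{dist}(x_0,\partial G)/(Mc))$, one obtains that (i) consecutive balls overlap in a set of measure comparable to $|B_k|$, (ii) $|x-y_k|\le C r_k$ (a consequence of the John condition $\mathrm{dist}(\gamma(t),\partial G)\ge t/c$), (iii) $\sum_k \chi_{B_k}$ is bounded, and (iv) for every $y\in B_k$ the enlarged ball $B_k$ itself is contained in $B(y,\tau\,\mathrm{dist}(y,\partial G))$. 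The choice $M>8/\tau$ enters precisely in securing (iv) together with the fact that the terminal ball $B_0$ has the prescribed radius.

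Next, since $x$ is a Lebesgue point and $r_k\to 0$ with $y_k\to x$, we have $u_{B_k}\to u(x)$, so we may telescope
\[
u(x)-u_{B_0}=\sum_{k=0}^{\infty}\bigl(u_{B_{k+1}}-u_{B_k}\bigr).
\]
For a single step, using that $|B_k\cap B_{k+1}|\approx |B_k|\approx |B_{k+1}|$,
\[
|u_{B_k}-u_{B_{k+1}}|\le C\,\frac{1}{|B_k|}\int_{B_k}|u(y)-u_{B_k}|\,dy+C\,\frac{1}{|B_{k+1}|}\int_{B_{k+1}}|u(y)-u_{B_{k+1}}|\,dy.
\]
Each of these is then handled by the (standard) fractional Poincar\'e inequality on a ball $B=B(y_k,r_k)$:
\[
\frac{1}{|B|}\int_B|u(y)-u_B|\,dy\le C\,r_k^{\delta}\,\frac{1}{|B|}\int_B\int_B \frac{|u(y)-u(z)|}{|y-z|^{n+\delta}}\,dz\,dy.
\]
Because of property (iv), the inner integral over $B$ is majorized by the defining integral of $g(y)$, so the right-hand side is bounded by $C\,r_k^\delta\,|B_k|^{-1}\int_{B_k}g(y)\,dy$.

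Using property (ii), $|x-y|\le |x-y_k|+r_k\le C r_k$ for $y\in B_k$, hence $r_k^{\delta-n}\le C|x-y|^{\delta-n}$ on $B_k$, giving
\[
|u_{B_k}-u_{B_{k+1}}|\le C\int_{B_k\cup B_{k+1}}\frac{g(y)}{|x-y|^{n-\delta}}\,dy.
\]
Summing over $k$ and invoking the bounded overlap (iii) yields
\[
|u(x)-u_{B_0}|\le C\int_G \frac{g(y)}{|x-y|^{n-\delta}}\,dy=C\,\mathcal{I}_\delta(\chi_G g)(x),
\]
which is the desired bound.

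The main obstacle is the bookkeeping in the chain construction: one must simultaneously ensure the overlap condition, the cone-type inclusion $|x-y_k|\le Cr_k$, the bounded-overlap property, and crucially condition (iv) that guarantees $g$ controls the local averaged oscillation. All constants in this construction are to be tracked as functions of $n$, $c$, $\delta$, and $M$, which is precisely the point flagged by the authors as the only difference from the argument already present in the proof of \cite[Theorem 4.10]{H-SV}. The fractional Poincar\'e estimate on a single ball is by now classical and can be quoted. Once the chain is in place, the telescoping and kernel estimate are routine.
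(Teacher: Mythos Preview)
Your outline is correct and matches the chaining argument the paper defers to \cite[Theorem~4.10]{H-SV}: build a chain of balls along the John curve, telescope $u(x)-u_{B_0}$ into differences of consecutive ball averages, apply the elementary fractional $(1,1)$-Poincar\'e inequality on each ball, and convert via $|x-y|\le Cr_k$ and bounded overlap into the Riesz potential of $g$. One small correction in the bookkeeping: take $r_k$ proportional to the arc-length parameter $t_k$ (for instance $r_k=t_k/(Mc)$) rather than to $\mathrm{dist}(y_k,\partial G)$, so that $r_k\to 0$ and hence $u_{B_k}\to u(x)$ at Lebesgue points; the John condition $\mathrm{dist}(\gamma(t_k),\partial G)\ge t_k/c$ together with $M>8/\tau$ then still secures $B_k\subset G$ and your property~(iv).
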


The following auxiliary result is from \cite[Lemma 5]{H}.

\begin{lem}\label{MeasureLem}
Let $\gamma$ be a positive measure on a set $X$ with $\gamma(X)<\infty$.
If $\omega\ge 0$ is a measurable function on $X$ such that $\gamma(\{x\in X:\,\omega(x)=0\})\geq \gamma(X)/2$,
then inequality
\[\gamma (\{x\in X:\,\omega(x)>t\})\le 2\inf_{a\in\R}\gamma (\{x\in X:\,\lvert \omega(x)-a\rvert>t/2\})
\]
holds  for every $t>0$.
\end{lem}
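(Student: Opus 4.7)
The plan is to fix $t>0$, an arbitrary $a\in\R$, and show the non-infimum form
\[
\gamma(\{\omega>t\})\le 2\gamma(\{\lvert\omega-a\rvert>t/2\})\,;
\]
taking the infimum over $a$ on the right then yields the claim. The argument splits into two cases according to whether $|a|$ is small or large compared with $t/2$, and in each case the hypothesis that $\omega$ vanishes on a set of $\gamma$-measure at least $\gamma(X)/2$ is used in exactly one of the two cases.

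First I would treat the case $|a|\le t/2$. Here, on the set $\{\omega>t\}$ one has $\lvert\omega-a\rvert\ge\omega-|a|>t-t/2=t/2$, so $\{\omega>t\}\subset\{\lvert\omega-a\rvert>t/2\}$ and the desired inequality holds even without the factor $2$. Next I would treat the case $|a|>t/2$. On the set $E=\{\omega=0\}$ we have $\lvert\omega-a\rvert=|a|>t/2$, hence $E\subset\{\lvert\omega-a\rvert>t/2\}$. Using the hypothesis $\gamma(E)\ge\gamma(X)/2$ together with the trivial bound $\gamma(\{\omega>t\})\le\gamma(X)$, we get
\[
\gamma(\{\omega>t\})\le\gamma(X)\le 2\gamma(E)\le 2\gamma(\{\lvert\omega-a\rvert>t/2\})\,.
\]

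Combining the two cases proves the inequality for every $a\in\R$, and the result follows by taking the infimum. There is no real obstacle here; the only thing to be careful about is handling the two regimes of $a$ separately so that the hypothesis on $\{\omega=0\}$ is invoked precisely where the trivial inclusion $\{\omega>t\}\subset\{\lvert\omega-a\rvert>t/2\}$ fails.
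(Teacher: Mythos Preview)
Your argument is correct. The two cases are handled properly: for $|a|\le t/2$ the reverse triangle inequality gives the inclusion $\{\omega>t\}\subset\{|\omega-a|>t/2\}$ directly, and for $|a|>t/2$ the zero set $\{\omega=0\}$ sits inside $\{|\omega-a|>t/2\}$, so the hypothesis $\gamma(\{\omega=0\})\ge\gamma(X)/2$ together with the trivial bound $\gamma(\{\omega>t\})\le\gamma(X)$ finishes it. Taking the infimum over $a$ is then immediate.

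As for comparison with the paper: the paper does not actually prove this lemma. It is stated as an auxiliary result and attributed to \cite[Lemma~5]{H}; no argument is given in the text. So there is nothing to compare against here, and your self-contained proof is a fine addition.
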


\section{Counterexamples}\label{s.counter}

We give an illustrative counterexample  which shows
that the improved  Sobolev--Poincar\'e inequalities are not valid
on bounded Ahlfors $n$-regular domains, in general.
Furthermore, we provide a counterexample showing that, for general John domains,
the seminorms appearing on right hand sides of \eqref{fractionalqp_nonimproved} and \eqref{fractionalqp_I}
are not comparable.

\begin{thm}\label{t.counter}
Let $0<\delta,\tau<1$ and $1< p,q<\infty$ be such that  $1/p-1/q=\delta/n$.
Then there exists a bounded  domain $D$ in $\R^n$ with the following properties.
\begin{itemize}
\item[(A)] $D$ is an Ahlfors $n$-regular domain; in particular,
there exists a constant $C_1>0$ such that inequality
\begin{equation}\label{fractionalqp_usual}
\int_D\vert u(x)-u_D\vert ^q\,dx
\le
C_1
\biggl(\int_D\int_{D}\frac{\vert u(x)-u(y)\vert ^p}{\vert x-y\vert ^{n+\delta p}}\,dy\,dx
\biggr)^{q/p}
\end{equation}
holds for every $u\in L^p(D)$.
\item[(B)] There is no  $C_2>0$ such that the improved fractional $(1,p)$-Poincar\'e inequality
\begin{equation}\label{fractionalqp_H}
\int_D\vert u(x)-u_D\vert\,dx
\le
C_2
\biggl(\int_D\int_{B(x,\tau \dist(x,\partial D))}\frac{\vert u(x)-u(y)\vert ^p}{\vert x-y\vert ^{n+\delta p}}\,dy\,dx
\biggr)^{1/p}
\end{equation}
holds for every $u\in L^\infty(D)$. In particular, the improved fractional $(q,p)$-Poincar\'e inequality
%Ê\eqref{fractionalqp_H} with the left hand side replaced by $\lVert u-u_D\rVert_{L^q(D)}$
does not hold on $D$.
\end{itemize}
\end{thm}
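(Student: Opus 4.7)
The plan is to exhibit a single bounded domain $D\subset\R^n$ for which (A) holds and the improved inequality \eqref{fractionalqp_H} of (B) fails along a sequence of test functions. I fix $\alpha > np/(n-\delta p)$ (possible since $p<n/\delta$) and take $D = Q \cup \bigcup_{k\ge 1} (M_k \cup S_k)$, where $Q$ is a fixed large cube, each $M_k$ is a cubical ``mushroom'' of side $r_k = 2^{-k}$ hanging below $Q$, and each $S_k$ is a stubby cubical stem of side $w_k = 2^{-\alpha k}$ (length and transverse width both $w_k$) attaching $M_k$ to $Q$ through a matching hole in the bottom face of $Q$; the attachment points are chosen so that the appendages are pairwise disjoint and well separated.

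The first task is to verify that $D$ is Ahlfors $n$-regular. The crucial design feature is the short stem: for $x\in S_k$ and $r>w_k$, the ball $B(x,r)$ already reaches either the fat region $M_k$ or the fat region $Q$, contributing $\sim r^n$ volume, and the remaining cases ($x\in Q$, $x\in M_k$, or $x\in S_k$ at small scales) are routine. Granting this, property (A) follows from Zhou's theorem \cite[Theorem 1.2]{Z}.

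For (B), I define $u_k\in L^\infty(D)$ to equal $1$ on $M_k$, to vanish on $D\setminus(M_k\cup S_k)$, and to depend linearly on the stem coordinate in $S_k$, so that $u_k$ is continuous and Lipschitz with $\lVert\nabla u_k\rVert_\infty \lesssim 1/w_k$ inside $S_k$. Since $|M_k|\sim r_k^n \ll |D|$, one gets the lower bound $\int_D |u_k-(u_k)_D|\,dx \gtrsim r_k^n$. The integrand of the improved seminorm is nonzero only when $x$ lies in the transition zone consisting of $S_k$ together with an $O(w_k)$-thick slab on each side (inside $M_k$ and inside $Q$); since the mushroom has a \emph{flat} top face, the contributing slabs have $n$-measure $\lesssim w_k^n$ (only the part directly under the opening contributes). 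Combining the Lipschitz bound $|u_k(x)-u_k(y)|\le|x-y|/w_k$ with the radial estimate $\int_{B(x,R)}|x-y|^{p-n-\delta p}\,dy \lesssim R^{p(1-\delta)}$ (valid since $\delta<1$) and the bound $\dist(x,\partial D)\lesssim w_k$ in the transition zone yields
\[
\int_D\int_{B(x,\tau\dist(x,\partial D))}\frac{|u_k(x)-u_k(y)|^p}{|x-y|^{n+\delta p}}\,dy\,dx \lesssim w_k^{n-\delta p}.
\]

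Consequently the ratio of the left-hand side to the $p$-th root of the right-hand side of \eqref{fractionalqp_H} applied to $u_k$ is bounded below by a positive constant times $r_k^n\,w_k^{\delta-n/p} = 2^{k[\alpha(n/p-\delta)-n]}$, which tends to $\infty$ by the choice of $\alpha$; this rules out any $C_2>0$ in (B). The final $(q,p)$-Poincar\'e assertion follows from H\"older's inequality on the bounded $D$. The most delicate step I anticipate is the Ahlfors-regularity verification at intermediate scales around the stems together with the slab-volume bound in the seminorm estimate; both rely on the twin design choices of cubical (flat-topped) mushrooms and stubby stems with stem length equal to stem width.
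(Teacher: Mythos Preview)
Your argument is correct and follows the same broad strategy as the paper---build an Ahlfors $n$-regular domain containing arbitrarily thin necks so that the improved seminorm can be made artificially small---but the specific constructions differ. The paper does not build the domain by hand; it invokes Theorem~\ref{1p_counter} (quoted from \cite[Theorem~6.9]{H-SV}), which supplies a domain $G_s$ obtained by inserting rooms and $s$-passages into the Whitney cubes of a fixed John domain; Ahlfors regularity then comes for free because $G_s$ differs from a John domain only by a null set (Remark~\ref{fat}). Your mushroom domain is more elementary and entirely self-contained: you verify regularity directly (the stubby stem of length $w_k$ guarantees that balls of radius $>w_k$ centred in $S_k$ already see a fat region) and exhibit explicit test functions $u_k$, so no external construction is needed. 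The seminorm estimate is also transparent in your setting, since for $y\in B(x,\tau\dist(x,\partial D))$ the segment $[x,y]$ lies in $D$, whence the piecewise-linear $u_k$ obeys $|u_k(x)-u_k(y)|\le |x-y|/w_k$ there. Both routes appeal to Zhou's embedding \cite{Z} for part~(A) and to H\"older's inequality for the final $(q,p)$ assertion; your approach trades the black-box citation for a short explicit computation, while the paper's approach yields extra information (the $s$-John condition and prescribed boundary dimension) that is not needed here.
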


The proof of Theorem \ref{t.counter} relies on \cite[Theorem 6.9]{H-SV} which we formulate below.

\begin{figure}
\includegraphics[viewport=32 5 112 85,width=6.6cm,clip]{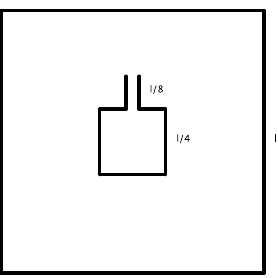}

\textsc{Figure~1.} {\em An $s$-apartment:}\, a room and an $s$-passage in a unit cube.
\end{figure}

\begin{thm}\label{1p_counter}
Let $s>1$, $p\in (1,\infty)$, $\lambda\in [n-1,n)$, and $\delta,\tau\in (0,1)$ be
such that
\[
s<\frac{n+1-\lambda}{1-\delta},\qquad p\le \frac{s(n-1)-\lambda+1}{n-s(1-\delta)-\lambda+1}\,.
\]
Then there exists a bounded domain $G_s\subset \R^n$ satisfying
Êthe following properties: the upper Minkowski dimension of $\partial G_s$ equals $\lambda$ and
 the
fractional $(1,p)$-Poincar\'e inequality \eqref{fractionalqp_H}  does not hold in $D=G_s$
for all functions in $L^\infty(G_s)$. Moreover,
there exists a constant $c\ge 1$ and a point $x_0\in G_s$ such that every $x\in G_s$ can be joined to $x_0$ by a rectifiable
curve $\gamma:[0,\ell]\to G_s$ such that $\dist(\gamma(t),\partial G_s)\ge t^s/c$
for every $t\in [0,\ell]$.
\end{thm}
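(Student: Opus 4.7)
The approach is to construct $G_s$ explicitly as a bounded ``hub-and-apartments'' domain and verify each claimed property in turn. Fix a cubical hub $H\subset\R^n$. For each $k\ge k_0$, with $r_k=2^{-k}$, place $N_k\approx r_k^{-\lambda}$ mutually disjoint $s$-apartments $A_{k,i}$ in an annular neighborhood of $H$; each $A_{k,i}$ is an isometric copy of the standard $s$-apartment in Figure~1 scaled by $r_k$, namely a cubical room $R_{k,i}$ of side $r_k$ connected to $\partial H$ by an $s$-passage $P_{k,i}$ of length $\sim r_k$ and cross-section $\sim r_k^s$. Since $\lambda<n$, the total room volume $\sum_k N_k r_k^n=\sum_k r_k^{n-\lambda}$ converges; and since $s>1$ while $(\lambda-1)/(n-1)<1$ for $\lambda\in[n-1,n)$, the total passage volume $\sum_k r_k^{1+s(n-1)-\lambda}$ also converges, so these apartments fit into a bounded region. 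The first hypothesis $s<(n+1-\lambda)/(1-\delta)$ enters only to guarantee that the upper bound on $p$ in the second hypothesis is a well-defined positive number.

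A direct $\epsilon$-neighborhood computation for $\partial G_s$ splits naturally into scales $r_j\ge\epsilon$ (each room boundary contributing volume $\sim\epsilon\, r_j^{n-1}$, with passages contributing less) and scales $r_j<\epsilon$ (each room contributing its full volume $\sim r_j^n$); summing with $N_j\approx r_j^{-\lambda}$ gives $|(\partial G_s)_\epsilon|\approx\epsilon^{n-\lambda}$ (with a $|\log\epsilon|$ factor only in the edge case $\lambda=n-1$), so $\overline{\dim}_M(\partial G_s)=\lambda$. For the $s$-chaining property, any $x\in A_{k,i}$ can be joined to a fixed $x_0\in H$ by the natural curve $\gamma$ that exits through the room, traverses the passage, and continues inside $H$: along the passage portion one has $t\lesssim r_k$ and $\dist(\gamma(t),\partial G_s)\gtrsim r_k^s\gtrsim t^s/c$, while inside $H$ the distance is bounded below by a fixed positive constant depending only on $H$ and $\mathrm{diam}(G_s)$.

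For the failure of \eqref{fractionalqp_H}, consider, for each $k$, the test function $u_k\in L^\infty(G_s)$ that equals $1$ on each room $R_{k,i}$, vanishes outside $\bigcup_i(R_{k,i}\cup P_{k,i})$, and interpolates linearly (hence is globally Lipschitz with constant $\sim r_k^{-1}$) along each passage. Standard estimates give $\int_{G_s}|u_k-(u_k)_{G_s}|\,dx\gtrsim N_k r_k^n = r_k^{n-\lambda}$. Since $u_k$ is locally constant away from the passages at scale $k$, the dominant contribution to the inner integral on the right side of \eqref{fractionalqp_H} comes from pairs with $x\in P_{k,i}$: the Lipschitz bound together with $\dist(x,\partial G_s)\sim r_k^s$ there, and the spherical integral $\int_{B(x,R)}|x-y|^{p-n-\delta p}\,dy\sim R^{p(1-\delta)}$, give each passage a contribution $\lesssim r_k^{1-p+s(n-1)+sp(1-\delta)}$, while the only other non-vanishing region, the $r_k^s$-pocket around each passage opening, yields something smaller by a factor $r_k^{s-1}$. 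Summing over the $N_k$ apartments and taking a $p$-th root, the right side is bounded by a constant times $r_k^{(1-p+s(n-1)+sp(1-\delta)-\lambda)/p}$. If \eqref{fractionalqp_H} held with a constant independent of $k$, comparing exponents as $r_k\to 0$ would force
\[
pn+p-1-\lambda(p-1) \ge s\bigl(n-1+p(1-\delta)\bigr),
\]
i.e., $p\ge (s(n-1)-\lambda+1)/(n-s(1-\delta)-\lambda+1)$, contradicting the hypothesis on $p$ in the strict case; the equality case can be recovered by a small logarithmic perturbation of the $N_k$ at sparse scales that does not alter $\overline{\dim}_M(\partial G_s)$.

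The main obstacle is the geometric packing argument---arranging $\approx r_k^{-\lambda}$ mutually disjoint scaled apartments on an annular shell around $H$ at each level $k$, with the thin passages suitably aligned so that none of them collide---together with the auxiliary verification that pairs $(x,y)$ with $x$ outside the passages (in a room or in $H$) and $y$ in a passage contribute asymptotically less than the interior-passage pairs. Both are technical but reduce to routine exponent bookkeeping once the correct geometric picture is fixed.
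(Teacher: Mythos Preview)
The paper does not prove this theorem; it is quoted from \cite[Theorem~6.9]{H-SV}, with only a one-sentence description of the construction: one inserts a room and an $s$-passage (of width $2\ell(Q)^s/8^s$) into each Whitney cube $Q$ of an appropriately chosen John domain $G$. Your hub-and-apartments model is a genuinely different geometric setup, and the difference is not cosmetic.

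Your analytic program is correct: the test functions $u_k$, the Lipschitz bound on the passage combined with $\dist(x,\partial G_s)\sim r_k^s$, and the resulting exponent comparison all match what a rooms-and-passages argument should give. The real gap is the packing. At scale $r_k=2^{-k}$ you need $N_k\approx r_k^{-\lambda}$ disjoint rooms of side $r_k$, each joined to a fixed cube $H$ by a passage of length $\sim r_k$. An annular shell of thickness $\sim r_k$ around $\partial H$ has volume $\sim r_k$ and so accommodates at most $\sim r_k^{-(n-1)}$ disjoint $r_k$-cubes; for $\lambda>n-1$ there is simply no room for $r_k^{-\lambda}$ of them, and the passage-attachment budget on $\partial H$ fails for the same reason unless $s(n-1)>\lambda$, which the hypotheses do not force. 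This is not ``routine exponent bookkeeping''; it is precisely why the cited construction starts from a John domain $G$ whose boundary already has upper Minkowski dimension $\lambda$. The Whitney decomposition of such a $G$ automatically supplies $\approx r_k^{-\lambda}$ correctly placed disjoint cubes at each scale, into which one carves the apartments; the resulting $G_s$ is $G$ minus a null set, which also delivers for free the Ahlfors $n$-regularity used in Remark~\ref{fat}.

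For $\lambda=n-1$ (the only value actually invoked in Theorem~\ref{t.counter}) your hub model can be made to work and your computations then give the desired failure of \eqref{fractionalqp_H}. For the full range $\lambda\in[n-1,n)$ you should replace the fixed hub by a John domain of the prescribed boundary dimension and attach the apartments to its Whitney cubes, as in the cited reference.
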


In the proof of Theorem \ref{1p_counter}
one modifies the usual rooms and $s$-passages construction by placing a room and a passage
of width  $2\ell(Q)^s/8^s$
to each Whitney cube $Q$ of an appropriate John domain $G$, we refer  to Figure~1 from \cite{HHSV}.

\begin{rem}\label{fat}
The domain $G_s$ given by Theorem \ref{1p_counter} is a bounded Ahlfors
$n$-regular domain. Indeed,  the construction begins with a fixed John domain $G$ which is, by
the John condition,
a bounded Ahlfors $n$-regular domain. The domain $G_s$ is then obtained by removing
a set of measure zero from $G$.
We also remark that the usual rooms and $s$-passages construction,
as described in \cite[\S3]{MR2356054}, does not yield an Ahlfors $n$-regular domain if $s>1$.
\end{rem}

\begin{proof}[Proof of Theorem \ref{t.counter}]
Let us fix $\lambda=n-1$ and choose $1<s<2/(1-\delta)$ such that
\[
p\le  \frac{1}{n-s(1-\delta)-\lambda + 1} \le \frac{s(n-1)-\lambda+1}{n-s(1-\delta)-\lambda+1}\,.
\]
Theorem \ref{1p_counter} implies that there exists a bounded domain $D:=G_s$ such that the fractional $(1,p)$-Poincar\'e inequality \eqref{fractionalqp_H} does not hold for all functions in $L^\infty(D)$. Since $q> 1$, the claim (B) follows by H\"older's inequality.

Let us now prove claim (A). By Remark \ref{fat},
the bounded domain $G_s$ is Ahlfors $n$-regular, and
inequality \eqref{fractionalqp_usual} is a consequence of this fact. Indeed,
the embedding $W^{\delta,p}(G_s)\subset L^q(G_s)$ is bounded
 by the Ahlfors $n$-regularity, see e.g. \cite[Theorem 1.2]{Z}.
In particular, there exists a constant $C>0$ such that inequality
\begin{equation}\label{poinc_frac}
\int_{G_s}\vert u(x)-u_{G_s}\vert ^{q}\,dx
\le
C
\biggl(\int_{G_s}\int_{G_s}\frac{\vert u(x)-u(y)\vert^p}{\vert x-y\vert ^{n+\delta p}}\,dy\,dx
+ \| u - u_{G_s}\|_{L^p(G_s)}^p
\biggr)^{q/p}
\end{equation}
holds for each $u\in L^p(G_s)$.
Inequality \eqref{fractionalqp_usual} follows from \eqref{poinc_frac} and the estimate
\begin{align*}
\lVert  u - u_{G_s}\rVert _{L^p(G_s)}^p\le \frac{\diam(G_s)^{n+\delta p}}{\lvert G_s\rvert}\int_{G_s}\int_{G_s}\frac{\vert u(x)-u(y)\vert^p}{\vert x-y\vert ^{n+\delta p}}\,dy\,dx\,.
\end{align*}
\end{proof}

Next we show that inequality \eqref{e.lip} fails for some John domains.

\begin{prop}\label{p.no-comp}
Let $1 \leq p < \infty$ and $0<\delta<1$ with $p\delta \geq 1$, and let $\tau=1$.
Then there exists a~John domain~$G$ for which inequality \eqref{e.lip} fails.
\end{prop}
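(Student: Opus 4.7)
The plan is to build an explicit John domain $G\subset\R^n$ and a test function $u\in L^\infty(G)\subset L^1(G)$ for which the right-hand side of \eqref{e.lip} is finite while the left-hand side is infinite, thus ruling out any finite constant~$C$. For concreteness I describe the construction in $\R^2$; in higher dimensions one takes the Cartesian product of the 2D domain below with the cube $(-1,1)^{n-2}$ and multiplies the test function by a transverse bump, and the argument is the same. I would take
\[
G = (-1,1)\times(0,1)\setminus(\{0\}\times(0,1/2]),
\]
the open rectangle with a vertical slit removed. This is a $c$-John domain: with John center $x_0=(0,3/4)$ just above the slit tip, each $x\in G$ connects to $x_0$ by a curve that first moves perpendicularly away from the slit---so that $\dist(\gamma(t),\partial G)$ grows linearly in arc length $t$---and then runs through the slab above the slit tip, where $\dist(\cdot,\partial G)$ is bounded below.

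For the test function I pick a nonnegative, not identically zero $\phi\in C_c^\infty((1/8,3/8))$ and set
\[
u(x_1,x_2)=\mathrm{sgn}(x_1)\,\phi(x_2).
\]
Because $\mathrm{supp}\,\phi$ is compactly contained in $(0,1/2)$, the segment $\{x_1=0,\ x_2\in\mathrm{supp}\,\phi\}$ lies inside the slit, hence in $\partial G$; so the would-be discontinuity of $\mathrm{sgn}(x_1)$ never happens inside $G$. Consequently $u\in C^\infty(G)\cap L^\infty(G)$ with uniformly bounded gradient on its support.

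To verify that the right-hand side of \eqref{e.lip} is finite I split on whether $x_2\le 1/2$ or $x_2>1/2$. In the first case $\dist(x,\partial G)\le|x_1|$ (the distance from $x$ to the slit), so $B(x,\dist(x,\partial G))$ lies on a single side of the slit, where $u$ is Lipschitz; a polar estimate then gives
\[
\int_{B(x,\dist(x,\partial G))}\frac{|u(x)-u(y)|^p}{|x-y|^{n+\delta p}}\,dy \le C\,\dist(x,\partial G)^{p(1-\delta)},
\]
which is integrable in $x$ over $G$ because $\delta<1$. In the second case $u(x)=0$ and the integrand vanishes unless $y_2\in\mathrm{supp}\,\phi$, which forces $|x-y|\ge x_2-3/8\ge 1/8$; the inner integral is therefore uniformly bounded. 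Summing these contributions yields a finite right-hand side.

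The left-hand side, on the other hand, is infinite under the hypothesis $p\delta\ge 1$. The key contribution comes from $x=(-a,x_2)$ and $y=(b,y_2)$ with $a,b\in(0,\epsilon_0)$ small and $x_2,y_2\in\mathrm{supp}\,\phi$, where $|u(x)-u(y)|^p\ge\phi(x_2)^p$. Integrating first in $y_2$ against $((a+b)^2+(x_2-y_2)^2)^{-(n+\delta p)/2}$ yields a constant multiple of $(a+b)^{-1-\delta p}$; setting $s=a+b$ then reduces the remaining two-dimensional integral to a positive multiple of $\int_0^{2\epsilon_0}s^{-\delta p}\,ds$, which diverges exactly when $p\delta\ge 1$. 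The main delicate point---and what dictates the design of~$u$---is that the only discontinuity of $u$ should live on the slit, so as to be visible from small Euclidean balls across the slit (driving the global seminorm to infinity) but not from any intrinsic ball $B(x,\dist(x,\partial G))$ (keeping the local seminorm finite); the choice of compactly supporting $\phi$ strictly inside $(0,1/2)$ is exactly the device that achieves this.
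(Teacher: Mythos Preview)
Your argument is correct and follows essentially the same idea as the paper: take a slit domain (the paper uses $G=(-1,1)^2\setminus((0,1)\times\{0\})$ with $u(x)=x_1\chi_{(0,1)^2}$, you use a vertical slit with a smooth odd-in-$x_1$ function), exploit that intrinsic balls cannot cross the slit so the local seminorm sees only Lipschitz behaviour, while the global seminorm picks up the jump across the slit and diverges when $p\delta\ge 1$. The only differences are cosmetic---orientation of the slit and a smooth bump versus a piecewise-linear test function.
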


\begin{proof}
Let $G=(-1,1)^2 \setminus ((0,1) \times \{0\})$.
Let $u:G\to [0,1]$ be defined by
$u(x)=x_1$ for $x\in (0,1)^2$, and $u=0$ otherwise.

We observe that if $x\in G$ and $y\in B(x,\dist(x,\partial G))$,
then $|u(x)-u(y)| \leq |x-y|$, hence the right hand side of \eqref{e.lip}
is finite.

To deal with the left hand side of \eqref{e.lip}, we denote
$L=(1/2,1) \times (-1/4,0)$, and for $x\in L$ we denote
$E(x)=(x_1-|x_2|, x_1) \times (0,|x_2|)$.
Then
\begin{align*}
\int_G\int_{G}\frac{\vert u(x)-u(y)\vert^p}{\vert x-y\vert ^{n+\delta p}}\,dy\,dx
&\geq
4^{-p} \int_L\int_{E(x)} \vert x-y\vert^{-n-\delta p}\,dy\,dx
\geq
c \int_L |x_2|^{-\delta p} \,dx = \infty\,.
\end{align*}
Thus, inequality  \eqref{e.lip} fails.
\end{proof}

\section{From weak to strong}\label{s.weak_to_strong}

The following theorem shows that an improved fractional Poincar\'e inequality of weak type is equivalent to
the corresponding inequality of strong type
if $q\ge p$.

\begin{thm}\label{t.truncation}
Let $\mu$ be a positive Borel measure on an open set $G\subset \R^n$ so that $\mu(G)<\infty$.
Let $0<\delta <1$, $0<\tau\le \infty$, and
$0<p\le q<\infty$.
Then the following conditions are equivalent 
 (with the understanding that $B(y,\tau \dist(y,\partial G)):=\R^n$ whenever
$y\in G$ and $\tau=\infty$):
\begin{itemize}
\item[(A)]
There is a constant $C_1>0$ such that inequality
\begin{align*}
&\quad\inf_{a\in\R}\sup_{t>0} \mu ( \{x\in G:\,|u(x)-a|>t\} ) t^{q}
 \\&\qquad\qquad\qquad  \le C_1
\biggl(\int_{G}\int_{G\cap B(y,\tau \dist(y,\partial G))}\frac{\vert u(y)-u(z)\vert^p}{\vert y-z\vert ^{n+\delta p}}
\,d\mu(z)\,d\mu(y)\biggr)^{q/p}
\end{align*}
holds, for  every $u\in L^\infty(G;\mu)$.
\item[(B)]
There is a constant $C_2>0$ such that inequality
\begin{align*}
\quad \inf_{a\in\R} \int_G\vert u(x)-a\vert ^{q}\,d\mu(x)
\le
C_2
\bigg(\int_G\int_{G\cap B(y,\tau \dist(y,\partial G))}\frac{\vert u(y)-u(z)\vert^p}{\vert y-z\vert ^{n+\delta p}}\,d\mu(z)
\,d\mu(y)\bigg)^{q/p}
\end{align*}
holds, for every $u\in L^1(G;\mu)$.
\end{itemize}
In the implication from {\rm (A)} to {\rm (B)} the constant $C_2$ is of the form $C(p,q)C_1$. In the converse implication
$C_1=C_2$.
\end{thm}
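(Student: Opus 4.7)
The plan is to handle the two implications separately. The direction $(B)\Rightarrow(A)$ is a one-line application of Chebyshev: since $\mu(G)<\infty$, $L^\infty(G;\mu)\subset L^1(G;\mu)$, so (B) applies to every $u\in L^\infty$, and for each $a\in\R$ and $t>0$, Chebyshev gives $t^q\mu(\{|u-a|>t\})\le \int_G|u-a|^q\,d\mu$. Taking $\sup_{t>0}$ and then $\inf_{a\in\R}$ on both sides and inserting (B) produces (A) with $C_1=C_2$.

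For the substantive direction $(A)\Rightarrow(B)$, I would follow the fractional Maz'ya truncation scheme of \cite{Dyda3}. Given $u\in L^1(G;\mu)$, pick a $\mu$-median $a_0$ and split $u-a_0=(u-a_0)_+-(u-a_0)_-$; both pieces are nonnegative and vanish on a set of $\mu$-measure at least $\mu(G)/2$, and since the maps $t\mapsto(t-a_0)_\pm$ are $1$-Lipschitz, it suffices to prove $\int_G w^q\,d\mu\le C(p,q)C_1\mathcal E(w)^q$ for any nonnegative $w\in L^1(G;\mu)$ with $\mu(\{w=0\})\ge \mu(G)/2$, where $\mathcal E(v)^p$ denotes the double integral on the right-hand side of (A) evaluated at $v$. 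For such $w$, define the dyadic truncations $v_k(x)=\min\{(w(x)-2^k)_+,2^k\}$, $k\in\Z$; each $v_k\in L^\infty$ and $\mu(\{v_k=0\})\ge\mu(G)/2$, so applying (A) to $v_k$ and then Lemma~\ref{MeasureLem} yields $t^q\mu(\{v_k>t\})\le 2^{q+1}C_1\mathcal E(v_k)^q$ for all $t>0$.

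A dyadic layer-cake decomposition of $\int_Gw^q\,d\mu$, combined with the inclusion $\{w>2^{k+1}\}\subset\{v_k>2^{k-1}\}$ and the weak bound at $t=2^{k-1}$, then gives
\[
\int_Gw^q\,d\mu\le C(p,q)C_1\sum_{k\in\Z}\mathcal E(v_k)^q.
\]
Since $q/p\ge 1$, the inequality $\sum_k a_k^{q/p}\le(\sum_k a_k)^{q/p}$ for nonnegative $a_k$ bounds this further by $C(p,q)C_1\bigl(\sum_{k}\mathcal E(v_k)^p\bigr)^{q/p}$, so everything reduces to the seminorm estimate $\sum_k\mathcal E(v_k)^p\le C(p)\mathcal E(w)^p$, which by Fubini follows from the pointwise inequality
\[
\sum_{k\in\Z}|v_k(x)-v_k(y)|^p\le C(p)\,|w(x)-w(y)|^p\qquad(w(x),w(y)\ge 0).
\]

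The main technical obstacle is this pointwise truncation inequality. For $p\ge 1$ it is elementary: the truncations $v_k$ are monotone in $w$ and $\sum_k v_k(x)=w(x)$, so $\sum_k|v_k(x)-v_k(y)|=|w(x)-w(y)|$ and the claim follows from $\sum_k a_k^p\le(\sum_k a_k)^p$ for nonnegative sequences. For $0<p<1$ this route fails, but bounding each $|v_k(x)-v_k(y)|$ by $2^k$ and summing the resulting geometric series in the range of indices where the difference is nonzero (essentially the case analysis in \cite{Dyda3}) still produces the required estimate with a $p$-dependent constant. Tracking constants through the whole argument yields $C_2=C(p,q)C_1$, as claimed.
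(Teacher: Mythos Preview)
Your proposal is correct and follows essentially the same route as the paper: median splitting, dyadic truncations $v_k$, Lemma~\ref{MeasureLem} plus (A) to get a weak bound for each $v_k$, layer cake, the inequality $\sum a_k^{q/p}\le(\sum a_k)^{q/p}$, and finally the seminorm estimate $\sum_k\mathcal E(v_k)^p\le C(p)\,\mathcal E(w)^p$. The one cosmetic difference is in the last step: you package it as the pointwise inequality $\sum_k|v_k(x)-v_k(y)|^p\le C(p)\,|w(x)-w(y)|^p$ (with a clean telescoping proof $\sum_k v_k=w$ when $p\ge 1$), whereas the paper decomposes $G$ into the level sets $A_i=\{2^{i-1}<w\le 2^i\}$ and uses $|v_k(x)-v_k(y)|\le 4\cdot 2^{k-j}|w(x)-w(y)|$ for $x\in A_i$, $y\in A_j$, $i\le k\le j$, summing the geometric series in $k$; that computation treats all $0<p<\infty$ at once and is precisely the ``case analysis in \cite{Dyda3}'' you invoke for $p<1$.
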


\begin{rem}
Theorem \ref{t.truncation} extends \cite[Theorem 4]{H} to the fractional setting.
The proof  is a combination of an argument in \cite[Theorem 4]{H}
 and a fractional Maz'ya truncation method from
the proof of \cite[Proposition 5]{Dyda3}.
\end{rem}

\begin{proof}[Proof of Theorem \ref{t.truncation}]
The implication from (B) to  (A) is immediate. Let us assume that condition  (A) holds for
all bounded $\mu$-measurable functions.
Fix $u\in L^1(G;\mu)$ and let $b\in\R$ be such that
\[
\mu (\{x\in G:\,u(x)\geq b\}) \geq\frac{\mu(G)}{2}\quad\text{and}\quad\mu (\{x\in G:\,u(x)\leq b\})\geq\frac{\mu(G)}{2}\,.
\]
We write $v_{+}=\max\{u-b,0\}$ and  $v_{-}=-\min\{u-b,0\}$. In the sequel $v$ denotes either $v_{+}$ or $v_{-}$;
all the statements are valid in both cases. Moreover, without loss of generality,
we may assume that $v\ge 0$ is defined and finite everywhere in $G$.

For  $0<t_1<t_2<\infty$ and every $x\in G$, we define
\[
v_{t_1}^{t_2}(x)=
\begin{cases}
t_2-t_1, \qquad &\text{if } v(x)\geq t_2\,,\\
v(x)-t_1, &\text{if } t_1<v(x)<t_2\,,\\
0, &\text{if } v(x)\leq t_1\,.
\end{cases}
\]
Observe that, if $0<t_1<t_2<\infty$, then
\[
\mu (\{x\in G:\,v_{t_1}^{t_2}(x)=0\})\geq \mu(G)/2\,.\]
For $y\in G$â we write $B_{y,\tau}=B(y,\tau \dist(y,\partial G))$.
By Lemma \ref{MeasureLem} and condition  (A), applied to the  function $v_{t_1}^{t_2}\in L^\infty(G;\mu)$,
\begin{equation}\label{e.mas}
\begin{split}
\sup_{t>0} \mu (\{x\in G\,:\,v_{t_1}^{t_2}(x)>t\})
t^{q}&\le 2^{1+q}\inf_{a\in\R}\sup_{t>0}\mu (\{x\in G:\,\lvert v_{t_1}^{t_2}(x)-a\rvert >t\}) t^q \\
&\le 2^{1+q}C_1 \biggl(\int_{G}\int_{G\cap B_{y,\tau}}\frac{\lvert v_{t_1}^{t_2}(y)-v_{t_1}^{t_2}(z)\rvert^p}{\lvert y-z\rvert ^{n+\delta p}}\,d\mu(z)\,d\mu(y)
\biggr)^{q/p}\,.
\end{split}
\end{equation}
We write
$E_k = \{x\in G \,:\,  v(x) > 2^{k} \}$ and $A_k = E_{k-1} \setminus E_{k}$, where $k\in \Z$.
Since $v\ge 0$ is finite everywhere,  we can write
\begin{equation}\label{e.decomp}
G=  \{x\in G\,:\, 0\le v(x)<\infty\} = \bigcup_{i\in \Z}A_i \cup \underbrace{\{x\in G \,:\, v(x)=0\}}_{=:A_{-\infty}}\,.
\end{equation}
Hence, by inequality \eqref{e.mas} and the fact that $\sum_{k\in \Z} |a_k|^{q/p} \leq (\sum_{k\in \Z} |a_k|)^{q/p}$, we obtain that
\begin{align*}
\int_G \lvert v(x)\rvert^q \,d\mu(x)
&\le  \sum_{k\in \Z} 2^{(k+1)q} \mu(A_{k+1})\\
&\le
\sum_{k\in\Z}  2^{(k+1)q} \mu(\{x\in G:v_{2^{k-1}}^{2^{k}}(x)\geq 2^{k-1}\})\\&\le
2^{1+4q}C_1\biggl(\sum_{k\in \Z} \int_{G}\int_{G\cap B_{y,\tau}}\frac{\vert v_{2^{k-1}}^{2^{k}}(y)-v_{2^{k-1}}^{2^{k}}(z)\vert^p}{\vert y-z\vert ^{n+\delta p}}\,d\mu(z)\,d\mu(y)\biggr)^{q/p}\,.
\end{align*}  
By \eqref{e.decomp} we can estimate
\begin{align}\label{e.remaining}
&\sum_{k\in \Z}  \int_{G}\int_{G\cap B_{y,\tau}}\frac{\vert v_{2^{k-1}}^{2^{k}}(y)-v_{2^{k-1}}^{2^{k}}(z)\vert^p}{\vert y-z\vert ^{n+\delta p}}\,d\mu(z)\,d\mu(y) \notag \\
& \le \bigg\{ \sum_{k\in \Z}  \sum_{ -\infty \le i\le k} \sum_{j\ge k} \int_{A_i}\int_{A_j\cap B_{y,\tau}}
\notag 
\\
&\quad \qquad 
+  \sum_{k\in \Z}   \sum_{i\ge k} \sum_{-\infty\le j\le k} \int_{A_i}\int_{A_j\cap B_{y,\tau}} 
\bigg\}
\frac{\vert v_{2^{k-1}}^{2^{k}}(y)-v_{2^{k-1}}^{2^{k}}(z)\vert^p}{\vert y-z\vert ^{n+\delta p}}\,d\mu(z)\,d\mu(y)\,.
\end{align}
Let $y\in A_i$ and $z\in A_j$, where $j-1> i \ge -\infty$.
Then $ \lvert v(y)-v(z)\rvert \geq \lvert v(z)\rvert  - \lvert v(y)\rvert  \geq 2^{j-2}$. Hence,
\begin{equation}\label{summandEstimate}
\lvert v_{2^{k-1}}^{2^{k}}(y)-v_{2^{k-1}}^{2^{k}}(z)\rvert \le 2^k\le 4\cdot 2^{k-j}\lvert v(y)-v(z)\rvert\,.
\end{equation}
Since the estimate
\[
\lvert v_{2^{k-1}}^{2^{k}}(y)-v_{2^{k-1}}^{2^{k}}(z)\rvert  \leq \lvert v(y)-v(z)\rvert
\]
holds for every $k\in\Z$,
inequality \eqref{summandEstimate} is valid whenever $-\infty\le i\le k\le j$
and $(y,z)\in A_i\times A_j$.
By
 inequality \eqref{summandEstimate}:
\begin{equation}\label{ThreeSums}
\begin{split}
\sum_{k\in \Z}\sum_{-\infty\le i\le k} &\sum_{j\ge k} \int_{A_i}\int_{A_j\cap B_{y,\tau}}\frac{\vert v_{2^{k-1}}^{2^{k}}(y)-v_{2^{k-1}}^{2^{k}}(z)\vert^p}{\vert y-z\vert ^{n+\delta p}}\,d\mu(z)\,d\mu(y)\\
 &\le  4^p \sum_{k\in \Z} \sum_{-\infty\le i\leq k} \sum_{j\geq k} 2^{p(k-j)} \int_{A_i} \int_{A_j\cap B_{y,\tau}} \frac{\lvert v(y)-v(z)\rvert^p}{\lvert y-z\rvert^{n+\delta p}}\,d\mu(z)\,d\mu(y).
\end{split}
\end{equation}
Since $\sum_{k=i}^j 2^{p(k-j)}  \le (1-2^{-p})^{-1}$, changing the order of the summation yields
that the right hand side of inequality \eqref{ThreeSums} is bounded by
\[
 \frac{4^p}{1-2^{-p}} 
\int_G \int_{G\cap B_{y,\tau}} \frac{|v(y)-v(z)|^p}{|y-z|^{n+\delta p}}\,d\mu(z)\,d\mu(y)\,.
\]
The estimation of the 
second term
 in \eqref{e.remaining} is  also performed as above.
To conclude that  (B) holds with $C_2=C(q,p)C_1$ it remains to recall that $\lvert u-b\rvert=v_{+}+v_{-}$ and $q>0$. Observe also that
$\lvert v_\pm(y)-v_\pm(z)\rvert \le \lvert u(y)-u(z)\rvert$ for all $y,z\in G$.
\end{proof}

\begin{rem}\label{r.suffices}
If $q\ge 1$ in  Theorem \ref{t.truncation}, then we may replace the
infimum on the
left hand side of the inequality appearing in condition {\rm (B)} by
$\int_G\vert u(x)-u_{G;\mu}\vert ^{q}\,d\mu(x)$.
Indeed, by H\"older's inequality,
\[
\int_G\vert u(x)-u_{G;\mu}\vert ^{q}\,d\mu(x)
\le 2^q\, \inf_{a\in\R}\int_G\vert u(x)-a\vert ^{q}\,d\mu(x)\,.
\]
Here we have written $u_{G;\mu}=\frac{1}{\mu(G)}\int_G u(y)\,d\mu(y)$.
\end{rem}

\color{black}

\section{Improved fractional Sobolev--Poincar\'e inequality}\label{s.fractional}

R. Hurri-Syrj\"anen and the third author prove in \cite[Theorem 4.10]{H-SV} an improved
fractional Sobolev--Poincar\'e inequality on a given $c$-John domain $G$. Namely, let
us fix $0<\delta,\tau<1$ and $1< p<n/\delta$.
Then there exists a constant $C=C(n,\delta,c,\tau,p)$ such that inequality
\begin{equation}\label{fractionalqp}
\int_G\vert u(x)-u_G\vert ^{np/(n-\delta p)}\,dx
\le
C
\biggl(\int_G\int_{B(x,\tau \dist(x,\partial G))}\frac{\vert u(x)-u(y)\vert ^p}{\vert x-y\vert ^{n+\delta p}}\,dy\,dx
\biggr)^{n/(n-\delta p)}
\end{equation}
holds for every $u\in L^1(G)$.

We prove inequality \eqref{fractionalqp} when $p=1$.

\begin{thm}\label{t.application}
Suppose that $G$ is a $c$-John domain in $\R^n$ and
let $\tau,\delta\in (0,1)$ be given.
Then there exists a constant $C=C(n,\delta,c,\tau)>0$ such that inequality
\begin{equation*}
\int_G\vert u(x)-u_G\vert ^{n/(n-\delta)}\,dx
\le
C
\biggl(\int_G\int_{B(x,\tau\dist(x,\partial G))}\frac{\vert u(x)-u(y)\vert}{\vert x-y\vert ^{n+\delta}}\,dy\,dx
\biggr)^{n/(n-\delta)}
\end{equation*}
holds for every $u\in L^1(G)$.
\end{thm}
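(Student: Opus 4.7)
The plan is to reduce the strong-type inequality in the theorem to a corresponding weak-type inequality by invoking Theorem \ref{t.truncation}, and then to establish that weak-type inequality by combining the fractional representation formula of Theorem \ref{t.representation} with the Adams--Hedberg weak-type estimate of Theorem \ref{AdamsHedberg}.

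More concretely, fix an auxiliary parameter $M>8/\tau$ and let $B_0 = B(x_0, \dist(x_0,\partial G)/(Mc))$, where $x_0$ is a John center of $G$. For any $u \in L^\infty(G)$ (in particular $u\in L^1_{\mathrm{loc}}(G)$ since $G$ is bounded), Theorem \ref{t.representation} gives the pointwise estimate $|u(x)-u_{B_0}| \le C\,\mathcal{I}_\delta(\chi_G g)(x)$ at every Lebesgue point $x \in G$, with
\[
g(y) = \int_{B(y,\tau \dist(y,\partial G))} \frac{|u(y)-u(z)|}{|y-z|^{n+\delta}}\,dz,\qquad y\in G.
\]
Feeding this bound into Theorem \ref{AdamsHedberg} and choosing $a = u_{B_0}$ yields
\[
\inf_{a\in\R}\sup_{t>0} |\{x \in G : |u(x)-a|>t\}|\,t^{n/(n-\delta)} \le C \bigg(\int_G g(y)\,dy\bigg)^{n/(n-\delta)},
\]
which is precisely condition (A) of Theorem \ref{t.truncation} with $\mu$ equal to Lebesgue measure restricted to $G$, exponents $p=1$ and $q = n/(n-\delta)$, and the same $\tau,\delta$ (note that $B(y,\tau\dist(y,\partial G)) \subset G$, so the ambient-versus-restricted ball distinction is vacuous).

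Since $q\ge 1 = p$, Theorem \ref{t.truncation} promotes (A) to condition (B), delivering a strong-type inequality of the form $\inf_{a\in\R}\int_G |u-a|^{n/(n-\delta)}\,dx \le C (\cdot)^{n/(n-\delta)}$, now valid for every $u \in L^1(G)$. Applying Remark \ref{r.suffices} (which is available because $q>1$) replaces this infimum by $\int_G |u(x)-u_G|^{n/(n-\delta)}\,dx$, which is exactly the conclusion of Theorem \ref{t.application}.

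The substantive obstacle (the fractional truncation argument at $p=1$) has already been handled in Theorem \ref{t.truncation}, so the present proof is mostly an assembly of tools; the only bookkeeping issues are to choose $M$ as a function of $\tau$ so that the excursion balls in the representation formula coincide with those on the right-hand side of the target inequality, and to note that the hypotheses of Theorem \ref{t.truncation} are satisfied for all $u\in L^\infty(G)$, which is ensured by $G$ being bounded.
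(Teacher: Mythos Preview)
Your proposal is correct and follows essentially the same route as the paper: reduce to the weak-type inequality via Theorem~\ref{t.truncation} and Remark~\ref{r.suffices}, then establish the weak-type estimate by combining the representation formula of Theorem~\ref{t.representation} (with a choice like $M=9/\tau$) and the weak $(1,n/(n-\delta))$ bound for $\mathcal{I}_\delta$ from Theorem~\ref{AdamsHedberg}. The only cosmetic difference is ordering---the paper first invokes the reduction and then verifies the weak-type bound, whereas you verify the weak-type bound first---but the content is identical.
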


\begin{proof}
By Theorem \ref{t.truncation} and Remark \ref{r.suffices}, it suffices
to prove that there exists a constant $C=C(n,\delta,c,\tau)>0$ such that inequality
\begin{equation}\label{WeakSobPoincare}
\begin{split}
&\inf_{a\in\R}\sup_{t>0}\big|\{x\in G:\,\lvert u(x)-a\rvert>t\}\big|t^{n/(n-\delta)}
\\&\qquad\qquad\le C
\biggl(\int_{G}\int_{B(y,\tau \dist(y,\partial G))}\frac{\vert u(y)-u(z)\vert}{\vert y-z\vert ^{n+\delta}}\,dz\,dy\biggr)^{n/(n-\delta)}
\end{split}
\end{equation}
holds for every $u\in L^\infty(G)$.
Let us denote by $x_0\in G$ the John center of $G$, and let
\[
B_0:=B(x_0,\mathrm{dist}(x_0,\partial G)/(Mc))\,,
\]
where $M=9/\tau$.
We also write
\begin{equation*}
g(y)=\int_{B(y,\tau\dist(y,\partial G))}\frac{\vert u(y)-u(z)\vert}{\vert y-z\vert ^{n+\delta}}\,dz
\end{equation*}
for every $y\in G$.
By Theorem \ref{t.representation},  for each Lebesgue point $x\in G$ of $u$,
\begin{equation}\label{OscillationEstimate}
\vert u(x)-u_{B_0}\vert\le C(n,c,\delta,\tau)
\int_{G}\frac{g(y)}{\vert x-y\vert ^{n-\delta}}\,dy=C(n,c,\delta,\tau)\,\mathcal{I}_{\delta}(\chi_G g)(x)\,.
\end{equation}
By inequality \eqref{OscillationEstimate} and Theorem \ref{AdamsHedberg},
there exists a constant $C=C(n,c,\delta,\tau)$ such that
\begin{align*}
&\big|\{x\in G:\,|u(x)-u_{B_0}|>t\}\big|t^{n/(n-\delta)}\\&\qquad\qquad \le C
\biggl(\int_{G}\int_{B(y,\tau \dist(y,\partial G))}\frac{\vert u(y)-u(z)\vert}{\vert y-z\vert ^{n+\delta}}\,dz\,dy\biggr)^{n/(n-\delta)}
\end{align*}
for every $t>0$. Inequality \eqref{WeakSobPoincare} follows.\end{proof}

\begin{rem}
Inequality \eqref{fractionalqp} makes sense only if the domain $G$ has a finite measure.
If we replace the left hand side of inequality \eqref{fractionalqp} by
\[
\int_G \lvert u(x)\rvert^{np/(n-\delta p)}\,dx\,,
\]
then the resulting inequality is valid
on so-called  unbounded John domains $G$
that are of infinite measure, we refer  to \cite[\S5]{H-SV2}.
\end{rem}

\section{Necessary conditions for the improved inequality}\label{s.necessary}

In this section, we obtain necessary conditions for the improved Poincar\'e inequalities.
Theorem \ref{t.necessary} gives a counterpart for the result of Buckley and Koskela on the classical
Sobolev--Poincar\'e inequality \eqref{sobolev_poincare}, see \cite[Theorem 1.1]{MR1359964}.

\begin{thm}\label{t.necessary}
Assume that $G$ is a domain of finite measure in $\R^n$  which satisfies the separation property.
Let $\delta\in (0,1)$ and $1\le p<n/\delta$ be given.
If
there exists a constant $C_1>0$  such that the improved fractional Sobolev--Poincar\'e inequality
\begin{equation}\label{e.improved_sp}
\int_G \lvert u(x)-u_G\rvert^{np/(n-\delta p)}\,dx \le C_1\bigg(\int_G \int_{B(x,\dist(x,\partial G))}
\frac{\lvert u(x)-u(y)\rvert^p}{\lvert x-y\rvert^{n+\delta p}}\,dy\,dx\bigg)^{n/(n-\delta p)}
\end{equation}
holds for every $u\in L^\infty(G)$, then $G$ is a John domain.
\end{thm}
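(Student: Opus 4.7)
The plan is to argue by contradiction, adapting Buckley and Koskela's argument for the classical Sobolev--Poincar\'e inequality (\cite[Theorem 1.1]{MR1359964}). Suppose $G$ is not a John domain, and let $x_0 \in G$ be the base point provided by the separation property. For each large constant $M > 1$, the failure of the $M$-John condition, together with the separation property, produces a point $x_M \in G$, a time $t_M$ along the associated separation-property curve $\gamma_M$ from $x_M$ to $x_0$, and a separating ball $B_M = B(y_M, r_M)$ with $y_M = \gamma_M(t_M)$ and $r_M = C_0 \dist(y_M, \partial G)$, such that $x_M$ and $x_0$ lie in different components of $G \setminus \partial B_M$. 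From the failure of the $M$-John condition one has $t_M > M \dist(y_M, \partial G)$; choosing $t_M$ as the first parameter at which $x_M$ leaves the ball $B(\gamma_M(t), C_0 \dist(\gamma_M(t), \partial G))$ yields $|x_M - y_M| \simeq r_M$. Together with $t_M \le \diam(G)$, these force $r_M \to 0$ as $M \to \infty$.

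Let $A_M$ be the component of $G \setminus \partial B_M$ containing $x_M$, and let $A_M^\star$ denote whichever of $A_M$ or $G \setminus A_M$ has the smaller Lebesgue measure. I apply \eqref{e.improved_sp} to the test function $u_M = \chi_{A_M^\star} \in L^\infty(G)$. Since $|A_M^\star| \le |G|/2$, a direct computation gives
\[
\int_G |u_M - (u_M)_G|^{np/(n-\delta p)}\, dx \ge c(n,\delta,p)\, |A_M^\star|,
\]
and a geometric argument (whichever of $x_0$ or $x_M$ lies in $A_M^\star$ serves as the center of a definite-size inscribed ball contained in $A_M^\star$) is intended to provide $|A_M^\star| \ge c_0 > 0$ uniformly in $M$.

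For the right-hand side, the crucial observation is: if $x \in A_M^\star$, $y \in G \setminus A_M^\star$, and $|x - y| < \dist(x, \partial G)$, then the segment $[x,y]$ is contained in $B(x, \dist(x, \partial G)) \subset G$ and hence must cross $\partial A_M^\star \cap G \subset \partial B_M$; in particular $|x-y| \ge \dist(x, \partial B_M)$. Integrating the inner variable,
\[
\int_{B(x, \dist(x, \partial G)) \setminus A_M^\star} |x-y|^{-n-\delta p}\, dy \le C(n,\delta,p)\, \dist(x, \partial B_M)^{-\delta p},
\]
and this contribution vanishes outside the set $W_M := \{x \in G : \dist(x, \partial B_M) < \dist(x, \partial G)\}$. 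A coarea/tubular-neighborhood estimate on $\partial B_M \cap G$ (whose $(n-1)$-measure is at most $C r_M^{n-1}$), combined with the restriction to $W_M$ (which prevents $\dist(x, \partial B_M)$ from being arbitrarily small at points of definite distance from $\partial G$), bounds the full RHS of \eqref{e.improved_sp} by $C\, r_M^{n-\delta p}$. Plugging in gives $c_0 \le C_1 \big(C\, r_M^{n-\delta p}\big)^{n/(n-\delta p)} = C'\, r_M^n \to 0$ as $M \to \infty$, the desired contradiction.

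The principal technical obstacle is the lower bound $|A_M^\star| \ge c_0$: if the construction happens to produce $y_M$ drifting towards $x_0$ or $\partial G$, the small component $A_M^\star$ may shrink at the rate $r_M^n$, matching the RHS scaling and precluding a direct contradiction. Overcoming this likely requires a refined choice of $x_M$ via a Whitney-chain argument as in \cite{MR1359964}, or replacing $\chi_{A_M^\star}$ by a Lipschitz mollification with transition width $\alpha r_M$ to balance the scales. A secondary difficulty is the RHS estimate when $\delta p \ge 1$, where the pointwise bound $\dist(x, \partial B_M)^{-\delta p}$ is not locally integrable near $\partial B_M \cap G$; the cut-off defining $W_M$ then becomes decisive, and one must handle the interplay between the geometry of $\partial B_M$ inside $G$ and the constraint $\dist(x, \partial B_M) < \dist(x, \partial G)$ with care.
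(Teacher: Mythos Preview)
Your outline identifies its own two fatal gaps, and neither is a technicality.  First, with the characteristic test function $u_M=\chi_{A_M^\star}$ the right-hand side of \eqref{e.improved_sp} is in fact infinite whenever $\delta p\ge 1$: near any point of $\partial B_M\cap G$ lying at positive distance from $\partial G$ one has $\dist(x,\partial B_M)\to 0$ while $\dist(x,\partial G)$ stays bounded below, so such points belong to $W_M$ and the bound $\dist(x,\partial B_M)^{-\delta p}$ is not locally integrable there.  The cut-off to $W_M$ therefore does nothing to tame the divergence, and no contradiction can be extracted in the range $\delta p\ge 1$.  Second, the claimed lower bound $|A_M^\star|\ge c_0$ is unavailable: when $A_M^\star$ is the component containing $x_M$, the only ball you control inside it has radius comparable to $r_M$, giving $|A_M^\star|\gtrsim r_M^n$, which exactly matches the right-hand side scaling even in the favorable case $\delta p<1$.

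The paper circumvents both obstructions by a different mechanism.  Rather than a single characteristic function, it tests \eqref{e.improved_sp} against the Lipschitz function equal to $1$ on $T(r)=T\setminus B(w,r)$, equal to $0$ on $G\setminus T(\rho)$, and affine in $\dist(\cdot,B(w,\rho))$ on the transition annulus $A(\rho,r)$; this is precisely the ``Lipschitz mollification'' you suggest, and it keeps the seminorm finite for every $\delta p$.  A term-by-term estimate of the seminorm yields the key decay
\[
|T(r)|^{p/q}\le \frac{c\,|T(\rho)|}{(r-\rho)^{\delta p}},\qquad d\le \rho<r,
\]
which, iterated along dyadic radii $r_j$ chosen so that $|T(r_j)|=2^{-j}|T|$, gives $\diam(T)\le C\big(d+|T|^{1/n}\big)$; a second application with $\rho=d$, $r=2d$ gives $|T|\le Cd^n$.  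These two estimates constitute Proposition~\ref{p.zero} and are exactly the quantitative inputs the Buckley--Koskela geometric argument needs; no uniform lower bound on the measure of a separated component is ever required.
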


To prove Theorem \ref{t.necessary} it suffices to
prove Proposition \ref{p.zero}, and then follow the geometric arguments given in \cite[pp. 6--7]{MR1359964}.
Observe that $(1/p-1/q)/\delta =1/n$ and $(n-\delta p)q/(np)= 1$  if $q=np/(n-\delta p)$.

\begin{prop}\label{p.zero}
Suppose that $G\subset \R^n$ is a domain of finite measure. Let
$\delta\in (0,1)$ and
$1\le p< q<\infty$ be given. Assume
that there exists a constant $C_1>0$ such that inequality
\begin{equation}\label{e.poinc}
\bigg(\int_G \lvert u(x)-u_G\rvert^{q}\,dx \bigg)^{1/q} \le C_1\bigg(\int_G \int_{B(x,\dist(x,\partial G))}
\frac{\lvert u(x)-u(y)\rvert^p}{\lvert x-y\rvert^{n+\delta p}}\,dy\,dx\bigg)^{1/p}
\end{equation}
holds for every $u\in L^\infty(G)$.
Fix a ball $B_0\subset G$, and let $d>0$ and $w\in G$.
 Then there exists a constant $C>0$ such that
\begin{equation}\label{e.T-est}
\diam(T)\le C(d+|T|^{(\frac{1}{p}-\frac{1}{q})\frac{1}{\delta}}) \qquad \text{and}\qquad
 \lvert T\rvert^{1/n}\le C(d+ d^{(n-\delta p)q/(np)})
\end{equation}
if $T$ is the union of all components of $G\setminus B(\omega,d)$ that do not intersect the ball $B_0$.
The constant $C$  depends on $C_1$, $\lvert B_0\rvert$, $\lvert G\rvert$, $n$, $\delta$, $q$, and $p$
only.
\end{prop}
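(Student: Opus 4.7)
The plan is to test inequality \eqref{e.poinc} with the characteristic function $u=\chi_T$, adapting the strategy Buckley and Koskela used for the classical case. First I would bound the left-hand side from below: with $u_G=\lvert T\rvert/\lvert G\rvert$,
\[
\int_G \lvert\chi_T-u_G\rvert^q\,dx = \lvert T\rvert(1-\lvert T\rvert/\lvert G\rvert)^q + (\lvert G\rvert-\lvert T\rvert)(\lvert T\rvert/\lvert G\rvert)^q \ge 2^{-q}\lvert T\rvert,
\]
provided $\lvert T\rvert\le\lvert G\rvert/2$; in the complementary case both target inequalities follow trivially from $\lvert T\rvert\le\lvert G\rvert$ and $\diam(T)\le\diam(G)$, with $C=C(\lvert G\rvert)$. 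Hence the LHS of \eqref{e.poinc} is bounded below by $c\lvert T\rvert^{1/q}$.

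The hard part is bounding the RHS. The key geometric fact is that for $x\in T$ and $y\in B(x,\dist(x,\partial G))\setminus T$ the line segment $[x,y]$ lies in $G$ and must meet $\overline{B(w,d)}$: otherwise the segment would be a path in $G\setminus B(w,d)$ from $x$ to $y$, placing them in the same component of $G\setminus B(w,d)$ and forcing $y\in T$. Consequently $\lvert x-y\rvert\ge \lvert x-w\rvert-d$, and the direction $(y-x)/\lvert y-x\rvert$ lies in the spherical cap $\Theta_x\subset S^{n-1}$ of directions from $x$ along which the ray meets $\overline{B(w,d)}$ (angular radius $\arcsin(d/\lvert x-w\rvert)$). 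Writing $\tau(x,\theta)\ge\lvert x-w\rvert-d$ for the distance from $x$ to $\overline{B(w,d)}$ along $\theta\in\Theta_x$, polar integration yields
\[
\int_{B(x,\dist(x,\partial G))\setminus T}\frac{dy}{\lvert x-y\rvert^{n+\delta p}} \le \frac{C}{\delta p}\int_{\Theta_x}\tau(x,\theta)^{-\delta p}\,d\theta \le C\min\!\bigl((\lvert x-w\rvert-d)^{-\delta p},\,d^{n-1}(\lvert x-w\rvert-d)^{-(n-1+\delta p)}\bigr),
\]
the first bound being relevant for $\lvert x-w\rvert\le 2d$ (when $\Theta_x$ approaches a hemisphere) and the second for $\lvert x-w\rvert\ge 2d$. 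Integrating this pointwise estimate over $x\in T$ in polar coordinates centered at $w$, using $\lvert T\cap\partial B(w,d+a)\rvert_{n-1}\le C(d+a)^{n-1}$, and handling the symmetric contribution from $x\notin T$, $y\in T$ analogously, produces (for $\delta p<1$)
\[
\iint \frac{\lvert\chi_T(x)-\chi_T(y)\rvert^p}{\lvert x-y\rvert^{n+\delta p}}\chi_{B(x,\dist(x,\partial G))}(y)\,dy\,dx \le C\bigl(d^{n-\delta p}+d^{n-1}(d+D)^{1-\delta p}\bigr),
\]
where $D:=\sup_{x\in T}\lvert x-w\rvert$; analogous (and simpler) bounds hold when $\delta p\ge 1$.

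Feeding both estimates into \eqref{e.poinc} gives $\lvert T\rvert^{p/q}\le C(d^{n-\delta p}+d^{n-1}(d+D)^{1-\delta p})$. Combined with the trivial inclusion $T\subset B(w,d+D)$, so that $\lvert T\rvert\le C(d+D)^n$ and $\diam(T)\le 2(d+D)$, I would close the argument by a case analysis: if $D\le d$ then $\lvert T\rvert\le Cd^n$ and $\diam(T)\le 2d$, both targets follow; if $D>d$ then $d+D\le 2D\sim\diam(T)$ and the second term dominates, leaving an algebraic manipulation using $\lvert T\rvert\le C\diam(T)^n$. The principal obstacle is establishing the sharp RHS bound above, in particular controlling $\int_{\Theta_x}\tau^{-\delta p}\,d\theta$ when $\lvert x-w\rvert$ is close to $d$, where the cap $\Theta_x$ swells toward a hemisphere and $\tau$ can vanish. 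A secondary difficulty arises in the final combinatorial step: extracting the $\diam(T)$ bound in the sub-critical regime $q<np/(n-\delta p)$ from an inequality that primarily constrains $\lvert T\rvert$ is delicate and may require a refinement of the RHS estimate adapted to the true shape of $T$, or an auxiliary test function concentrated on a far-lying portion of $T$.
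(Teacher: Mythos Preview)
Your plan has two genuine gaps, both of which you partially flag but do not close; together they prevent the argument from reaching either inequality in \eqref{e.T-est}.

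\textbf{The test function $\chi_T$ is too rough.} When $\delta p\ge 1$, the energy on the right of \eqref{e.poinc} for $u=\chi_T$ can be infinite: for $x\in T$ with $\lvert x-w\rvert=d+a$, your own analysis gives an inner integral of order $a^{-\delta p}$, and integrating in $a$ over the portion of $T$ near $\partial B(w,d)$ diverges whenever $\delta p\ge 1$ and $T$ accumulates at that sphere. In that regime \eqref{e.poinc} applied to $\chi_T$ is vacuous. The paper avoids this entirely by testing with a Lipschitz function: for $d\le\rho<r$ set $u=1$ on $T(r):=T\setminus B(w,r)$, $u=0$ on $G\setminus T(\rho)$, and $u(x)=\dist(x,B(w,\rho))/(r-\rho)$ on the annular piece $A(\rho,r)=T(\rho)\setminus T(r)$. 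A term-by-term estimate of the double integral (using your same geometric observation that $B(x,\dist(x,\partial G))$ can only reach $G\setminus T(\rho)$ through $B(w,\rho)$) yields the clean bound
\[
\lvert T(r)\rvert^{p/q}\le \frac{c\,\lvert T(\rho)\rvert}{(r-\rho)^{\delta p}}\,,
\]
valid for all $\delta p\in(0,p)$.

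\textbf{A single application cannot give the diameter bound.} Even for $\delta p<1$, your estimate $\lvert T\rvert^{p/q}\le C\bigl(d^{n-\delta p}+d^{n-1}(d+D)^{1-\delta p}\bigr)$ carries $D\sim\diam(T)$ on the right, and the auxiliary facts $\lvert T\rvert\le C(d+D)^n$, $\diam(T)\le 2(d+D)$ do not let you eliminate $D$: a long thin $T$ can have small $\lvert T\rvert$ and large $D$, and your inequality does not forbid this. The paper's resolution is an iteration: choose $r_0=d$ and $r_j>r_{j-1}$ so that $\lvert A(r_{j-1},r_j)\rvert=2^{-j}\lvert T\rvert$, hence $\lvert T(r_j)\rvert=2^{-j}\lvert T\rvert$; the displayed inequality above then gives $r_j-r_{j-1}\le c\,2^{-j(1/p-1/q)/\delta}\lvert T\rvert^{(1/p-1/q)/\delta}$, and summing over $j$ yields the first estimate in \eqref{e.T-est}. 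For the second estimate one applies the same Lipschitz test function once, with $\rho=d$, $r=2d$, and replaces the bound $\lvert T(\rho)\rvert$ by $\lvert A(d,2d)\rvert+\lvert B(w,d)\rvert\le C d^n$; this gives $\lvert T(2d)\rvert^{p/q}\le C d^{n-\delta p}$, and combining with $\lvert T\rvert\le Cd^n+\lvert T(2d)\rvert$ finishes. The iterative scheme is the essential missing idea in your proposal.
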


Notice that inequalities in~\eqref{e.T-est} extend \cite[Theorem 2.1]{MR1359964} to the fractional case.

\begin{proof}[Proof of the first inequality in \eqref{e.T-est}]
Without loss of generality, we may assume that $T\not=\emptyset$.
Let $T(r)=T\setminus B(\omega,r)$, we will later prove inequality
\begin{equation}\label{e.postponed}
\lvert T(r)\rvert^{p/q} \leq \frac{c\lvert T(\rho)\rvert}{(r-\rho)^{\delta p}}\,,
\end{equation}
provided $d\leq \rho < r$.
Assuming that this inequality holds, one proceeds as follows.
Define $r_0=d$ and for $j\geq 1$ pick $r_j>r_{j-1}$ such that
\[
\lvert A(r_{j-1},r_j)\rvert=\lvert T\cap B(w,r_j)\setminus  B(w,r_{j-1})\rvert =2^{-j}\lvert T\rvert\,.
\]
Then
$\lvert T(r_j)\rvert =\lvert T\setminus  B(w,r_j)\rvert=2^{-j}|T|$.
Hence, by inequality \eqref{e.postponed}
\begin{align*}
\diam(T)&\le 2d+\sum\limits_{j=1}^\infty 2\lvert r_j-r_{j-1}\rvert \\&\le 2d+c\sum\limits_{j=1}^\infty(\lvert T(r_{j-1})\rvert \lvert T(r_j)\rvert^{-p/q})^{\frac{1}{\delta p}}
\\&=2d+c\sum\limits_{j=1}^\infty(2^{-j+1}\lvert T\rvert 2^{jp/q}\lvert T\rvert^{-p/q})^{\frac{1}{\delta p}}
\\&=2d+c\lvert T\rvert^{(\frac{1}{p}-\frac{1}{q})\frac{1}{\delta}}\sum\limits_{j=1}^\infty 2^{-j(\frac{1}{p}-\frac{1}{q})\frac{1}{\delta}}\le 2d+c\lvert T\rvert^{(\frac{1}{p}-\frac{1}{q})\frac{1}{\delta}}
\end{align*}
and this concludes the main line of the argument.

It remains to prove inequality \eqref{e.postponed}.
We assume that $T(r)\neq\emptyset$
 and define a bounded function $u$ on $G$ as follows
\[
u(x)=
\begin{cases}
1\,,  &\quad x\in T(r)\,,\\
\frac{\dist(x,B(\omega,\rho))}{r-\rho}\,,&\quad x\in A(\rho,r)=T(\rho)\setminus T(r)\,,\\
0\,, &\quad x\in G\setminus T(\rho)\,.
\end{cases}
\]
For $x\in G$, let us denote $B_{x,1} = B(x, \dist(x,\partial G))$.
By the fact that $u=0$ on $B_0$ and inequality \eqref{e.poinc} we obtain
\begin{equation}\label{NecessityFirstIneq}
\begin{split}
\lvert T(r)\rvert^{p/q}&\le \biggl(\int_G\vert u(x)\vert ^q\,dx\biggr)^{p/q}\\&\le c\biggl(\int_G\lvert u(x)-u_G\rvert ^q\,dx\biggr)^{p/q}\le c\int_G\int_{B_{x,1}}\frac{\lvert u(x)-u(y)\rvert ^p}{\vert x-y\vert ^
{n+\delta p}}\,dy\,dx\,.
\end{split}
\end{equation}
For all measurable $E,F\subset G$, denote
\[
I(E,F) = \int_E \int_{B_{x,1}\cap F} \frac{\lvert u(x)-u(y)\rvert^p}{\lvert x-y\rvert^{n+\delta p}}\,dy\,dx\,.
\]
Since $u=0$ on $G\setminus T(\rho)$ and $u=1$ on $T(r)$, we can write the right hand side of \eqref{NecessityFirstIneq} as
\begin{equation}\label{FracNormSummands}
\begin{split}
I(G,G)=&I(T(r), A(\rho,r)) +I(T(r),G\setminus T(\rho)) \\&
+ I(A(\rho,r),T(r)) + I(A(\rho,r),A(\rho,r)) +
 I(A(\rho,r),G\setminus T(\rho)) \\&
+ I(G\setminus T(\rho),T(r)) + I(G\setminus T(\rho),A(\rho,r))\,.
\end{split}
\end{equation}

For the first and the third term of \eqref{FracNormSummands} we use the following estimate
\[
I(T(r), A(\rho,r))+I(A(\rho,r), T(r))\le 2\int_{A(\rho,r)}\int_{T(r)}\frac{\vert u(x)-u(y)\vert^p}{\vert x-y\vert^{n+\delta p}}\,dy\,dx.
\]
We observe that, for every $x\in A(\rho,r)$,
\[\lvert \dist(x,B(\omega,\rho))-(r-\rho)\rvert\le\min\{ \dist(x,T(r)), r-\rho \}=m(x)\,.\]
By the definition of function $u$,
\begin{align*}
&\int_{A(\rho,r)}\int_{T(r)}\frac{\vert \dist(x,B(\omega,\rho))-(r-\rho)\vert ^p}{(r-\rho)^{p}\vert x-y\vert^{n+\delta p}}\,dy\,dx\le
\int_{A(\rho,r)}\int_{T(r)}\frac{m^p(x)}{(r-\rho)^{p}\vert x-y\vert^{n+\delta p}}\,dy\,dx\\
&\le \int_{A(\rho,r)}\int_{\R^n\setminus B(x,m(x))}\frac{m^p(x)}{(r-\rho)^{p}\vert x-y\vert^{n+\delta p}}\,dy\,dx=  c\int_{A(\rho,r)}\frac{(m(x))^{p-\delta p}}{(r-\rho)^p}\,dx \le \frac{c\lvert A(\rho,r)\rvert}{(r-\rho)^{\delta p}}.
\end{align*}

We estimate the second term $I(T(r),G\setminus T(\rho))$. Let us show that, for every $x\in T(r)$,
\begin{equation}\label{Inclusion}
B_{x,1} \cap (G\setminus T(\rho))\subset \R^n\setminus B(x,r-\rho)\,.
\end{equation}
If $y\in G\setminus T(\rho)$, then the point $y$  belongs to the ball $B(\omega,\rho)$ or to a component of
$G\setminus B(\omega,d)$ that intersects the ball $B_0$. At the same time, if $y\in B_{x,1}$, then $B(x,\lvert x-y\rvert)\subset G$ which
means that the situation when $x$ and $y$ are in different components of $G\setminus B(\omega,d)$ is not possible. Hence,
$y\in B(\omega,\rho)$, and indeed $
\lvert x-y\rvert\ge\lvert x-w\rvert-\lvert w-y\rvert \ge r-\rho$.

By \eqref{Inclusion}, for each $x\in T(r)$, we have
\[
\int_{B_{x,1} \cap (G\setminus T(\rho))}\frac{1}{\vert x-y\vert ^{n+\delta p}}\,dy\le \int_{\R^n\setminus B(x,r-\rho)}\frac{1}{\vert x-y\vert ^{n+\delta p}}\,dy=
c(r-\rho)^{-\delta p}\,,
\]
and hence
\[
I(T(r),G\setminus T(\rho))\le c\frac{|T(r)|}{(r-\rho)^{\delta p}} \le c\frac{|T(\rho)|}{(r-\rho)^{\delta p}}\,.
\]

Next we consider $I(A(\rho,r),A(\rho,r))$. Notice that, for every $x\in A(\rho,r)$,
\begin{align*}
&\int_{B_{x,1} \cap A(\rho,r)}\frac{\vert \dist(x,B(\omega,\rho))-\dist(y,B(\omega,\rho))\vert ^p}{(r-\rho)^{p}\vert x-y\vert ^{n+\delta p}}\,dy
\\
&\le (r-\rho)^{-p}\int_{A(\rho,r)\cap B(x,r-\rho)}\frac{1}{\vert x-y\vert ^{n+\delta p-p}}\,dy+
\int_{A(\rho,r)\setminus B(x,r-\rho)}\frac{ 1}{\vert x-y\vert ^{n+\delta p}}\,dy\\
&\le  \frac{c(r-\rho)^{p-\delta p}}{(r-\rho)^p}+ \frac{c}{(r-\rho)^{\delta p}}\,.
\end{align*}
Hence, we obtain that
\[
I(A(\rho,r),A(\rho,r))\le c\frac{|A(\rho,r)|}{(r-\rho)^{\delta p}}\,.
\]

Then we focus on $I(A(\rho,r),G\setminus T(\rho))$.
Let us first observe that, for every $x\in A(\rho,r)$,
\[B_{x,1} \cap (G\setminus T(\rho))\subset \R^n\setminus B(x,\dist(x,B(\omega,\rho)))\,.\]
To verify this, we fix $y\in B_{x,1} \cap (G\setminus T(\rho))$. By repeating the argument used in the proof of
inclusion \eqref{Inclusion} we obtain that $y\in B(\omega,\rho)$ and
$\lvert y-x\rvert \ge \dist(x,B(\omega,\rho))$.
Thus, for every $x\in A(\rho,r)$,
\[
\int_{B_{x,1} \cap (G\setminus T(\rho))}\frac{1}{\vert x-y\vert ^{n+\delta p}}\,dy\le \int_{\R^n\setminus B(x,\dist(x,B(\omega,\rho)))}
\frac{1}{\vert x-y\vert ^{n+\delta p}}\,dx= %\le
c(\dist(x,B(\omega,\rho)))^{-\delta p}\,.
\]
Therefore, we have
\[
I(A(\rho,r),G\setminus T(\rho))\le c\int_{A(\rho,r)}\frac{(\dist(x,B(\omega,\rho)))^{p-\delta p}}{(r-\rho)^p}\,dx\le \frac{c\lvert A(\rho,r)\rvert}{(r-\rho)^{\delta p}}\,.
\]

In order to estimate the remaining terms $I(G\setminus T(\rho),T(r))$ and $I(G\setminus T(\rho),A(\rho,r))$ we observe that,
if $x\in G\setminus T(\rho)$ and $B_{x,1} \cap T(\rho)\neq\emptyset$, then $x\in B(w,\rho)$. This follows from the fact that, if $y\in B_{x,1}\cap T(\rho)$ then $B(x,|x-y|)\subset G$ and,
hence, $x$ and $y$ can not belong to different components of $G\setminus B(\omega,\rho)$.

Using the observation above and adapting the estimates for the term $I(T(r), G\setminus T(\rho))$, we obtain
\begin{align*}
I(G\setminus T(\rho),T(r))&=I(B(\omega,\rho) \cap G,T(r))\\
&\le \int_{T(r)}\int_{B(\omega,\rho)}\frac{\vert u(x)-u(y)\vert^p}{\vert x-y\vert^{n+\delta p}}\,dy\,dx\le c\frac{|T(\rho)|}{(r-\rho)^{\delta p}}\,.
\end{align*}
Following the same argument and adapting the estimates for $I(A(\rho,r), G\setminus T(\rho))$ we obtain
 that $I(G\setminus T(\rho),A(\rho,r))\le c|A(\rho,r)|(r-\rho)^{-\delta p}$.
\end{proof}

We proceed to the second part of Proposition~\ref{p.zero}.

\begin{proof}[Proof of the second inequality in \eqref{e.T-est}]
We first observe that
$\lvert T\rvert \le Cd^n + \lvert T(2d)\rvert$. Hence, it remains to show that
\begin{equation}\label{T2dMeasure}
\lvert T(2d)\rvert\le Cd^{(n-\delta p)q/p}\,.
\end{equation}
In order to do this, we use a slightly modified proof of the first inequality. More precisely, by inequality \eqref{NecessityFirstIneq}, for $d\leq \rho < r$, we have
\[
\lvert T(r)\rvert^{p/q}\le I(G,G)\,,
\]
where $I(G,G)$ can be written as in \eqref{FracNormSummands}.
From the reasoning above it is seen that all the terms in \eqref{FracNormSummands} except $I(T(r), G\setminus T(\rho))$ and $I(G\setminus T(\rho),T(r))$ are bounded from above by
$
c|A(\rho,r)|(r-\rho)^{-\delta p}
$. Furthermore, for the remaining terms, we have
\begin{align*}
&I(T(r),G\setminus T(\rho))+I(G\setminus T(\rho),T(r))=I(T(r), B(\omega,\rho)\cap G)+I(B(\omega,\rho)\cap G,T(r))\\
&\le 2\int_{B(\omega,\rho)} \int_{T(r)}\frac{1}{\vert x-y\vert^{n+\delta p}}\,dy\,dx
\le 2\int_{B(\omega,\rho)}\int_{\R^n\setminus B(x,r-\rho)}\frac{1}{\vert x-y\vert^{n+\delta p}}\,dy\,dx
\le c\frac{|B(\omega,\rho)|}{(r-\rho)^{\delta p}}\,.
\end{align*}
Thus,
\[
\lvert T(r)\rvert^{p/q}\le \frac{c}{(r-\rho)^{\delta p}}\big(|A(\rho,r)|+ |B(\omega,\rho)|\big)\,.
\]
Next we set $\rho=d$ and $r=2d$ in the inequality above, and using the trivial estimates for the measures of a ball and of an annulus, we obtain \eqref{T2dMeasure}.
\end{proof}

\bibliographystyle{abbrv}

\begin{thebibliography}{10}

\bibitem{AH}
D.~R. Adams and L.~I. Hedberg.
\newblock {\em Function spaces and potential theory}, volume 314 of {\em
  Grundlehren der Mathematischen Wissenschaften [Fundamental Principles of
  Mathematical Sciences]}.
\newblock Springer-Verlag, Berlin, 1996.

\bibitem{bellido}
J.~C. Bellido and C.~Mora-Corral.
\newblock {E}xistence for nonlocal variational problems in peridynamics.
\newblock
  {http://www.uam.es/personal\_pdi/ciencias/cmora/carlos.mora/About\_Me\_files/Nonlocal.pdf}.

\bibitem{B}
B.~Bojarski.
\newblock Remarks on {S}obolev imbedding inequalities.
\newblock In {\em Complex analysis, {J}oensuu 1987}, volume 1351 of {\em
  Lecture Notes in Math.}, pages 52--68. Springer, Berlin, 1988.

\bibitem{MR1359964}
S.~Buckley and P.~Koskela.
\newblock Sobolev-{P}oincar\'e implies {J}ohn.
\newblock {\em Math. Res. Lett.}, 2(5):577--593, 1995.

\bibitem{MR2215170}
B.~Dyda.
\newblock On comparability of integral forms.
\newblock {\em J. Math. Anal. Appl.}, 318(2):564--577, 2006.

\bibitem{Dyda3}
B.~Dyda and A.~V. V{\"a}h{\"a}kangas.
\newblock Characterizations for fractional {H}ardy inequality.
\newblock arXiv:1308.1886.

\bibitem{H}
P.~Haj{\l}asz.
\newblock Sobolev inequalities, truncation method, and {J}ohn domains.
\newblock In {\em Papers on analysis}, volume~83 of {\em Rep. Univ.
  Jyv\"askyl\"a Dep. Math. Stat.}, pages 109--126. Univ. Jyv\"askyl\"a,
  Jyv\"askyl\"a, 2001.

\bibitem{MR2356054}
P.~Harjulehto and R.~Hurri-Syrj{\"a}nen.
\newblock On a {$(q,p)$}-{P}oincar\'e inequality.
\newblock {\em J. Math. Anal. Appl.}, 337(1):61--68, 2008.

\bibitem{HHSV}
P.~Harjulehto, R.~Hurri-Syrj\"anen, and A.~V. V\"ah\"akangas.
\newblock {O}n the $(1,p)$-{P}oincar\'e inequality.
\newblock Illinois J. Math., to appear.

\bibitem{H-SV2}
R.~Hurri-Syrj\"anen and A.~V. V\"ah\"akangas.
\newblock {F}ractional {S}obolev--{P}oincar\'e and fractional {H}ardy
  inequalities in unbounded {J}ohn domains.
\newblock arXiv:1311.2863.

\bibitem{H-SV}
R.~Hurri-Syrj{\"a}nen and A.~V. V{\"a}h{\"a}kangas.
\newblock On fractional {P}oincar\'e inequalities.
\newblock {\em J. Anal. Math.}, 120(1):85--104, 2013.

\bibitem{MR948443}
O.~Martio.
\newblock John domains, bi-{L}ipschitz balls and {P}oincar\'e inequality.
\newblock {\em Rev. Roumaine Math. Pures Appl.}, 33(1-2):107--112, 1988.

\bibitem{M}
V.~G. Maz'ya.
\newblock {\em Sobolev spaces}.
\newblock Springer Series in Soviet Mathematics. Springer-Verlag, Berlin, 1985.
\newblock Translated from the Russian by T. O. Shaposhnikova.

\bibitem{MR851612}
Y.~G. Reshetnyak.
\newblock Integral representations of differentiable functions.
\newblock In {\em Partial differential equations ({N}ovosibirsk, 1983)}, pages
  173--187, 223--224. ``Nauka'' Sibirsk. Otdel., Novosibirsk, 1986.

\bibitem{MR1246886}
J.~V{\"a}is{\"a}l{\"a}.
\newblock Exhaustions of {J}ohn domains.
\newblock {\em Ann. Acad. Sci. Fenn. Ser. A I Math.}, 19(1):47--57, 1994.

\bibitem{Z}
Y.~Zhou.
\newblock Fractional {S}obolev extension and imbedding.
\newblock Trans. Amer. Math. Soc., to appear.

\end{thebibliography}
\def\cprime{$'$} \def\cprime{$'$} \def\cprime{$'$}

\end{document}